\DeclareMathOperator{\LY}{LY}
\DeclareMathOperator{\Lip}{Lip}
\DeclareMathOperator{\osc}{osc}
\DeclareMathOperator{\OSC}{OSC}
\DeclareMathOperator{\Hausd}{Hausd}
\DeclareMathOperator{\Einf}{Einf}
\DeclareMathOperator{\Esup}{Esup}
\definecolor{cite_blue}{rgb}{0,0,1}
\definecolor{link_red}{rgb}{0.75,0,0}
\theoremstyle{plain}
\newtheorem{theorem}{Theorem}[section]
\newtheorem{corollary}[theorem]{Corollary}
\newtheorem{lemma}[theorem]{Lemma}
\newtheorem{proposition}[theorem]{Proposition}
\newtheorem{MainTheorem}{Primary Theorem}
\theoremstyle{definition}
\newtheorem{definition}[theorem]{Definition}
\newtheorem{notation}[theorem]{Notation}
\newtheorem{remark}[theorem]{Remark}
\newtheorem{direction}[theorem]{Future Direction}
\newtheorem*{acknowledgments}{Acknowledgments}
\begin{document}

\title{Nonstationary open dynamical systems}

\author{Brett Geiger$^{1}$}
\address{$^{1}$Southern Methodist University, Dallas, Texas, USA}

\author{William Ott$^{2}$}
\address{$^{2}$University of Houston, Houston, Texas, USA}

\email{bgeiger@mail.smu.edu, ott@math.uh.edu}


\keywords{Nonstationary dynamical system, open dynamical system, memory loss, decay of correlations}

\subjclass[2010]{37A25, 37A50, 37C60, 37D20, 60F99}

\date{\today}

\begin{abstract}
This paper studies nonstationary open dynamical systems from the statistical viewpoint.
By open, we mean that trajectories may escape through holes in the phase space.
By nonstationary, we mean that the dynamical model itself (as well as the holes) may vary with time.
Unlike the setting of random dynamical systems, no assumptions are made about the statistics of this variability.
We formulate general results that yield conditional memory loss (an analog of decay of correlations) at exponential rates for nonstationary open dynamical systems.
We apply our results to a class of nonstationary piecewise-smooth expanding systems in higher dimension with moving holes.
\end{abstract}

\maketitle

\thispagestyle{empty}

\raggedbottom

\section{Introduction}
\label{s:intro}

Physical processes in the natural world often evolve in temporally varying environments.
Examples range from genetic regulatory networks operating {\itshape in vivo}~\cite{Blanchard-2018, Veliz-Cuba-2016} to collisional systems modeled by hyperbolic billiards with moving scatterers~\cite{Stenlund-LSY-Zhang-2013}.
In response, ergodic-theoretic techniques have been brought to bear on two classes of nonequilibrium dynamical systems: nonstationary dynamics and open systems.
By {\itshape nonstationary}, we mean that the dynamical model itself evolves over time.
By {\itshape open}, we refer to systems with holes through which trajectories may escape.
We aim here to unify these two lines of inquiry.
This paper develops an analytical framework to treat systems that are both nonstationary and open.
That is, we allow the dynamical model itself to evolve over time and we allow holes that can move and deform.
Importantly, unlike the setting of random dynamical systems~\cite{Arnold-RDS-1998}, we do not assume knowledge of the statistics of the temporal variability of the model and the holes.

When analyzing nonstationary systems, one should think not about invariant measures and their statistical properties, but rather about the co-evolution of pairs of trajectories or pairs of initial measures.
For a predominantly contractive nonstationary system, it is reasonable to expect that a single common future will be shared by almost every pair of trajectories.
Consider for example a sequence of contractions $(\hat{f}_{i})_{i=1}^{\infty}$ on a compact metric space $(X,d)$:
There exists $0 < L < 1$ such that $d(\hat{f}_{i} (p), \hat{f}_{i} (q)) \leqs L d(p,q)$ for all $i \in \mbb{N}$ and $p,q \in X$.
Form a nonstationary system by applying the maps $\wh{F}_{m} = \hat{f}_{m} \circ \hat{f}_{m-1} \circ \cdots \circ \hat{f}_{1}$ to $X$.
If all of the $\hat{f}_{i}$ are the same, the contraction mapping principle yields a unique fixed point to which every trajectory will converge.
The general picture, however, is this:
Individual trajectories may wander forever, but a single future is shared by every pair of trajectories since $\diam (\wh{F}_{m} (X)) \to 0$ as $m \to \infty$.
Put another way, the metric space coalesces into a small blob that continues to shrink and move as time progresses.

Stochastic differential equations with contractive properties exhibit an analogous phenomenon.
Kunita~\cite{Kunita-flows-1990} has shown that a stochastic differential equation of the form
\begin{equation*}
\mrm{d} x_{t} = a (x_{t}) \mrm{d} t + \sum_{i=1}^{n} b_{i} (x_{t}) \circ \mrm{d} W_{t}^{i}
\end{equation*}
generates a stochastic flow of diffeomorphisms, wherein almost every Brownian path defines a time-dependent flow.
Lyapunov exponents for such flows are known to be well-defined, {\itshape nonrandom} (they do not depend on noise realization), and constant almost everywhere in phase space if the system is ergodic.
For ergodic systems, the sign of the greatest Lyapunov exponent $\Lambda_{\max}$ conveys a great deal of information about the nature of the random dynamics.
If $\Lambda_{\max} < 0$, then mild additional assumptions imply that any ensemble of initial conditions will coalesce near a unique equilibrium point that evolves in time~\cite{LeJan-1985}.
This phenomenon, colloquially known as {\itshape random sinks}, occurs for dissipative systems such as the stochastically-forced two-dimensional Navier-Stokes equations~\cite{Masmoudi-Young-2002, Mattingly-1999}.
When $\Lambda_{\max} > 0$, initial distributions will track intricately-structured random Sinai-Ruelle-Bowen (SRB) measures~\cite{Ledrappier-Young-1988}.
This result exemplifies general ideas in dynamics:
Local instability leads to global statistical regularity.
Systems with some hyperbolicity induce contractive behavior, not in phase space as before, but in suitable spaces of measures.

For autonomous deterministic systems, the notion of statistical return to equilibrium exemplifies this type of contractive behavior: Suitable absolutely continuous probability measures converge to the unique invariant measure when evolved by the dynamics.
Return to equilibrium and the related notion of decay of correlations have received extensive attention in the autonomous deterministic context.
Techniques used to analyze correlation decay rates include spectral transfer operator analysis ({\itshape e.g.}~\cite{Baladi-Demers-Liverani-2018, Baladi-Tsujii-2007, Demers-Zhang-2011, gouezelliverani2006, Young-towers-1998}), coupling ({\itshape e.g.}~\cite{Bressaud-Liverani-2002, Chernov-coupling-2006, Young-coupling-1999}), convex cones and the projective Hilbert metric ({\itshape e.g.}~\cite{Liverani-cones-1995}), and operator renewal theory ({\itshape e.g.}~\cite{Melbourne-Terhesiu-2012}).

For nonstationary systems, it is natural to consider not return to equilibrium, but rather decay of the distance between two measures as they evolve.
We say that such a system exhibits memory loss in the statistical sense if for every suitable pair $\varphi_{0}$ and $\psi_{0}$ of initial probability densities, we have $\lim_{t \to \infty} \norm{\varphi_{t} - \psi_{t}} = 0$, where $\varphi_{t}$ and $\psi_{t}$ denote the evolved probability densities.
Both the strength of the norm and the rate of convergence are of interest.
Exponential-rate statistical memory loss has been established for smooth expanding maps and one-dimensional piecewise-smooth expanding maps~\cite{Ott-Stenlund-Young-2009}, nonstationary Anosov systems~\cite{Stenlund-Anosov-2011}, piecewise-smooth expanding systems in higher dimension~\cite{GuptaOttTorok2013}, and dispersing billiards with moving scatterers~\cite{Stenlund-LSY-Zhang-2013}.
The billiard result raises an interesting question: How do scatterer movement (and perhaps deformation) emerge from the microscopic physics?
See~\cite{Chernov-Dolgopyat-2009} for an effort to model scatterer movement under bombardment by light particles.
Aimino {\itshape et al.}~\cite{Aimino-polynomial-2015} establish polynomial-rate statistical memory loss for nonstationary one-dimensional intermittent systems of Pomeau-Manneville type.

The study of autonomous dynamical systems with holes (open systems) originates in no small part with the work of Pianigiani and Yorke~\cite{Pianigiani-Yorke-1979}.
Many important questions may be asked about such systems.
Does mass escape at a well-defined asymptotic rate?
Where should holes be placed so as to maximize escape rates~\cite{Bunimovich-Yurchenko-2011}?
How do the conditionally invariant measures organize system statistics?
What is the nature of the dynamics on the set of points that never enter a hole (the survivor set)?
See~\cite{Demers-Young-open-2006} for a semi-expository point of entry into the open systems literature.
Specific models analyzed thus far include the logistic family~\cite{Demers-logistic-2005}, Lasota-Yorke maps~\cite{Liverani_Deschamps_2003}, dispersing billiards in dimension two~\cite{Demers-billiards-2014, Demers-Wright-LSY-2010}, and spherical billiards~\cite{Dettmann-Rahman-2014}.

Here, we study conditional memory loss, an analog of decay of correlations for systems that are both nonstationary and open.
Loosely speaking, a nonstationary open system exhibits conditional memory loss in the statistical sense if for every pair $\varphi_{0}$ and $\psi_{0}$ of suitable initial probability densities, we have $\norm{ (\varphi_{t} / \| \varphi_{t} \|) - (\psi_{t} / \| \psi_{t} \|) } \to 0$ as $t \to \infty$, where $\norm{\cdot}$ denotes the $L^{1}$ norm with respect to the reference measure.
That is, the distance between the two measures decays if we normalize by the mass that has yet to escape.
We formulate a general framework for establishing exponential-rate conditional memory loss in the statistical sense.
We then demonstrate the utility of our approach by applying our two primary theorems to a class of piecewise-smooth expanding systems in higher dimension studied by Saussol~\cite{SaussolQuasi2000} and by Gupta, Ott, and T\"{o}r\"{o}k~\cite{GuptaOttTorok2013}.

\section{A general framework for conditional memory loss}
\label{s:framework1}

Let $X$ be a compact Riemannian manifold and let $\lambda $ denote Riemannian volume on $X$.  Assume $\lambda(X) = 1.$
We begin by defining nonstationary dynamical systems on $X$.
Consider a sequence $(\hat{f}_{i} : X \to X)_{i=1}^{\infty }$.
For $m \in \mbb{N}$, define $\wh{F}_{m} = \hat{f}_{m} \circ \hat{f}_{m-1} \circ \cdots \circ \hat{f}_{1}$.
We call the sequence $(\wh{F}_{m})_{m=1}^{\infty }$ a {\bfseries \itshape nonstationary dynamical system}.
An open system is created from $(\wh{F}_{m})_{m=1}^{\infty }$ by introducing holes.
For $j \in \mbb{N}$, let $H_{j} \subset X$.
We call $H_{j}$ the {\itshape hole} at time $j$.
For $m \in \mbb{N}$, define the time-$m$ {\itshape survivor set} $S_{m}$ by
\begin{equation*}
S_{m} = X \setminus \bigcup_{i=1}^{m} (\wh{F}_{i}^{-1} (H_{i})).
\end{equation*}
Let $F_{m}$ denote the restriction $\wh{F}_{m} | S_{m}$; that is, $F_{m}$ is defined on points with trajectories that have not fallen into a hole after $m$ iterates.
We call the pair $((F_{m}), (H_{j}))$ a {\bfseries \itshape nonstationary open dynamical system}.

In Sections~\ref{ssection:set1} and \ref{ss:primary-theorems}, we formulate a general framework that allows one to establish a type of statistical memory loss for nonstationary open dynamical systems.
First defined in~\cite{Mohapatra_Ott_2014}, the notion of memory loss we use is a nonstationary analog of decay of correlations and may be described informally as follows.
Let $\varphi_{0}$ and $\psi_{0}$ be two initial probability densities defined on $X$.
Let $\varphi_{t}$ and $\psi_{t}$ denote the evolved densities under the action of the nonstationary open dynamical system $((F_{m}), (H_{j}))$.
We say that $((F_{m}), (H_{j}))$ exhibits {\itshape \bfseries conditional memory loss in the statistical sense} if for all probability densities $\varphi_{0}$ and $\psi_{0}$ chosen from a suitable class, we have
\begin{equation*}
\lim_{t \to \infty } \vnorm{ \frac{\varphi_{t}}{\norm{\varphi_{t}}_{L^{1} (\lambda )}} - \frac{\psi_{t}}{\norm{\psi_{t}}_{L^{1} (\lambda )}} }_{L^{1} (\lambda )} = 0.
\end{equation*}
Our results yield memory loss of this type at exponential rates.

\subsection{Setting}
\label{ssection:set1}

We formulate our results for a metric space $\mscr{M}$ of nonsingular maps on $X$ and a collection $\mscr{H}$ of subsets of $X$ (holes), from which we derive nonstationary open dynamical systems.
We assume holes are Borel measurable, so $\mscr{H}$ is contained in the Borel $\sigma$-algebra.
Our analysis involves the sequence of transfer operators $(\mcal{L}_{F_{m}})$ that describes the evolution of densities under the action of $((F_{m}), (H_{j}))$.  
The operator $\mcal{L}_{F_{m}} : L^{1} (\lambda ) \to L^{1} (\lambda )$ is defined by
\begin{equation*}
\int_{X} \mcal{L}_{F_{m}} (\varphi ) \cdot \psi \, \mrm{d} \lambda = \int_{S_{m}} \varphi \cdot (\psi \circ F_{m}) \, \mrm{d} \lambda ,
\end{equation*}
where $\varphi \in L^{1} (\lambda )$ and $\psi \in L^{\infty } (\lambda )$.

We now describe the technical assumptions behind our general results.

\begin{enumerate}[leftmargin=*, topsep=1ex, itemsep=0.7ex, label=\textbf{(A\arabic*)}, ref=A\arabic*, series=brees]

\item
\label{A:1}
(Inequality of Lasota-Yorke/Doeblin-Fortet type)
There exists a `strong' seminorm $| \cdot |_{s}$ on $L^{1} (\lambda )$ such that the following holds.
For every $\hat{g} \in \mscr{M}$, there exist a neighborhood $\mscr{N} (\hat{g})$ of $\hat{g}$ in $\mscr{M}$, a time $T_{1} \in \mbb{N}$, and constants $\theta_{\LY}  \in (0,1)$ and $C_{\LY} > 0$ such that for every sequence $(\hat{f}_{i})_{i=1}^{\infty }$ in $\mscr{N} (\hat{g})$ and every sequence $(H_j)$ in $\mscr{H}$, the corresponding nonstationary open system $((F_{m}), (H_{j}))$ satisfies
\begin{equation}
\label{e:LY-DF}
| \mcal{L}_{F_{k T_{1}}} (\varphi ) |_{s} \leqs \theta_{\LY}^{k T_{1}} |\varphi |_{s} + C_{\LY} \vnorm{ \varphi }_{L^{1} (\lambda )}
\end{equation}
for every $\varphi \in L^{1} (\lambda )$ and every $k \in \mbb{N}$.

\end{enumerate}

\begin{remark}
  Crucially, the existence of such a Lasota-Yorke inequality depends on the nature of both the maps and the holes.
  Holes are always Borel-measurable, but assuming they possess certain geometric regularity facilitates the derivation of Lasota-Yorke inequalities in concrete settings.
  We do this in Section~\ref{s:app}, wherein we work with a class of piecewise-smooth expanding systems.
\end{remark}

\begin{remark}
When no holes are present and we iterate a single map, in many cases a Lasota-Yorke inequality may be established for the map itself.
However, when iterating a single map in the presence of fixed holes, it can become necessary to derive a Lasota-Yorke inequality for a suitable power of the map.
This is why we formulate~\pref{A:1} for $k T_{1}$-fold compositions of maps in the nonstationary open case.
\end{remark}

\begin{remark}
In our primary theorems, we only require that~\eqref{e:LY-DF} hold for finitely many values of $k$.
\end{remark}

\begin{enumerate}[resume*=brees]

\item
\label{A:2}
(Strength of $|\cdot|_s$) 
There exist $M_{1},M_{2} > 0$ such that the following hold.
First, for every finite partition $\mscr{Q}$ of $X$ into measurable sets and every $\varphi \in L^1(\lambda)$ with $|\varphi|_s < \infty$, we have
\begin{equation}
\label{semi:decomp}
|\varphi|_s \geqs M_{1} \sum_{X' \in \mscr{Q}} |\varphi | X' |_s.
\end{equation}
Second, for every measurable subset $X' \subset X$ of positive measure and every $\varphi \in L^1(\lambda)$ with $|\varphi|_s < \infty$, we have
\begin{equation}
\label{semi:strong}
|\varphi | X' |_s \geqs M_{2} \left| \frac{1}{\lambda(X')} \int_{X'} \vp(x) \, \mrm{d}\lambda(x) - \vp(z) \right|
\end{equation}
for $\lambda$-a.e. $z \in X'$.

\end{enumerate}

\begin{remark}
While~\pref{A:1} is essential, \pref{A:2} is merely a technical assumption used only for the proof of Lemma~\ref{L:control}.
In any concrete setting, one may adjust or drop~\pref{A:2} provided one can establish Lemma~\ref{L:control}.
In particular, the estimate in Lemma~\ref{L:control} involves the constant $M = M_{1} M_{2}$.
For the oscillation seminorm we use in Section~\ref{s:app}, we show that our primary theorems allow $M$ to depend on the partition $\mscr{Q}$.
\end{remark}

Assumptions~\pref{A:1} and \pref{A:2} alone do not imply conditional memory loss.
Even for iterates of a single map with no holes, \pref{A:1} and \pref{A:2} do not imply ergodicity, and memory loss in this situation (decay of the correlation function) is equivalent to mixing.
We therefore formulate a mixing condition in terms of a sequence of (not necessarily dynamical!) partitions of the phase space.

\begin{definition}[A class of mixing maps with respect to $\lambda $]
\label{defn:mix}
Let $\zeta_{1} \in (0,1)$ and $\zeta_{2} \in (1, \infty )$.
We say the measurable map $\hat{g} : X \to X$ belongs to $\mscr{E} (\zeta_{1}, \zeta_{2})$ if there exists a sequence $(\mscr{Q}_{n})_{n=1}^{\infty }$ of finite partitions of $X$ into measurable sets such that
\begin{equation*}
\diam_{\lambda}(\mscr{Q}_n) \defas \sup_{A\in\mscr{Q}_n}\lambda(A) \to 0
\end{equation*}
as $n\to \infty$, and for every $n \in \mbb{N}$ there exists a time $E(\mscr{Q}_{n}, \zeta_{1}, \zeta_{2})$ for which
\begin{equation}
\label{e:Leb-mix}
\zeta_{1} < \frac{ \lambda (J_{1} \cap \hat{g}^{-i} (J_{2})) }{\lambda (J_{1}) \lambda (J_{2}) } < \zeta_{2}
\end{equation}
for every $J_{1}, J_{2} \in \mscr{Q}_{n}$ and every $i \geqs E (\mscr{Q}_{n}, \zeta_{1}, \zeta_{2})$.
We call the partitions $\mscr{Q}_{n}$ \emph{mixing partitions} associated with $\hat{g}$.
\end{definition}

\begin{remark}
We emphasize that the partitions $\mscr{Q}_{n}$ need not be dynamical partitions.
In any concrete setting, one could choose geometrically simple partitions (e.g. partitions of the torus into hypercubes).
In our primary theorems, we only require that~\eqref{e:Leb-mix} hold for finitely many iterates of the map.
Consequently, the mixing condition is a checkable condition with respect to our primary results.
\end{remark}

Because we work in a nonstationary setting, it is advantageous to work with maps for which the mixing condition expressed in Definition~\ref{defn:mix} is stable.
We make this precise with the following assumption.

\begin{enumerate}[resume*=brees]

\item
\label{A:3}
(Stability of mixing for maps in $\mscr{M} \cap \mscr{E}(\zeta_1, \zeta_2)$)
Let $\zeta_{1} \in (0,1)$, $\zeta_{2} \in (1, \infty )$, and $\hat{g}\in \mscr{M} \cap \mscr{E}(\zeta_1,\zeta_2)$.
We assume that for every mixing partition $\mscr{Q}_{n}$ associated with $\hat{g}$ and every $S \geqs E(\mscr{Q}_n,\zeta_1,\zeta_2)$, there exist $\delta >0$ and $\varepsilon > 0$ such that for every sequence $(\hat{f}_k)_{k=1}^S$ in $B(\hat{g},\delta)$ and every sequence $(H_j)_{j=1}^S$ of holes in $\mscr{H}$ with $\lambda(H_j) \leqs \varepsilon$ for all $1 \leqs j \leqs S$, we have
\begin{equation*}
\zeta_1 < \frac{\lambda(J_1 \cap (F_S)^{-1}(J_2))}{\lambda(J_1)\lambda(J_2)} < \zeta_2
\end{equation*}
for all $J_1,J_2 \in \mscr{Q}_n.$

\end{enumerate}

\begin{definition}
  We call $\hat{g} \in \mscr{M} \cap \mscr{E} (\zeta_1, \zeta_2)$ \emph{stably mixing} if $\hat{g}$ satisfies~\pref{A:3}.
\end{definition}

\subsection{Primary general theorems}
\label{ss:primary-theorems}

Conditional memory loss in the statistical sense involves the asymptotic decay of $\norm{ \mcal{R}_{F_{m}} (\varphi ) - \mcal{R}_{F_{m}} (\psi ) }_{L^1(\lambda)}$, where $\mcal{R}_{F_{m}}$ is the normalized transfer operator defined by
\begin{equation*}
\mcal{R}_{F_{m}} (\varphi ) = \frac{\mcal{L}_{F_{m}} (\varphi )}{\vnorm{ \mcal{L}_{F_{m}} (\varphi ) }_{L^{1} (\lambda )}},
\end{equation*}
provided the denominator is nonzero.
Notice that while $\mcal{L}_{F_{m}}$ is linear, the operator $\mcal{R}_{F_{m}}$ is not linear.
Our main results concern the action of $\mcal{R}_{F_{m}}$ on a suitable subset of the space
\begin{equation*}
\mcal{D} = \vset{ \varphi \in L^{1} (\lambda ) : |\varphi |_{s} < \infty , \; \: \varphi \geqs 0, \; \: \norm{\varphi }_{L^{1} (\lambda )} = 1 }.
\end{equation*}
In particular, we introduce a nontrivial convex cone $\mcal{C}_{a} \subset L^{1} (\lambda )$ and then examine the action of $\mcal{R}_{F_{m}}$ on $\mcal{D} \cap \mcal{C}_{a}$.
See Remark~\ref{remark:WhyCone} for intuition about why the cone is needed, and Eq.~\eqref{e:DefCone} for the precise specification of the cone.

We formulate two theorems - a local result and a global result.
The local result asserts that conditional memory loss occurs at an exponential rate for the nonstationary open system provided the holes are small (they may move without restriction), and the maps involved are all close to a single stably mixing base map.
The global result allows us to move far and wide within the space of maps when selecting those that form the nonstationary system, provided we move slowly.

\begin{MainTheorem}[General local result]
\label{thrm:local}
Assume $\mscr{M}$ and $\mscr{H}$ satisfy (\ref{A:1})-(\ref{A:2}). 
Let $\zeta_1 \in (0,1)$, $\zeta_2 \in (1,\infty)$, and suppose that $\hat{g}\in \mscr{M} \cap \mscr{E}(\zeta_1,\zeta_2)$ satisfies~\pref{A:3}.
There exists a convex cone $\mcal{C}_{a} \subset L^{1} (\lambda )$, as well as associated constants $\delta_0 > 0$, $\vep_{0} > 0$, $\Lambda \in (0,1)$, and $C_{0} > 0$, such that the following holds.
Let $(\hat{f}_{i})_{i = 1}^{\infty }$ be any sequence of maps in the ball $B(\hat{g},\delta_0),$ and let $(H_{j})_{j=1}^{\infty }$ be any sequence of holes in $\mscr{H}$ such that $\lambda (H_{j}) \leqs \vep_{0}$ for every $j \in \mbb{N}$.
The resultant nonstationary open dynamical system $((F_{m}), (H_{j}))$ exhibits conditional memory loss at an exponential rate: For every $\varphi , \psi \in \mcal{D} \cap \mcal{C}_{a}$, we have
\begin{equation*}
\vnorm{ \mcal{R}_{F_{m}} (\varphi ) - \mcal{R}_{F_{m}} (\psi ) }_{L^{1} (\lambda )} \leqs C_{0} \Lambda^{m}
\end{equation*}
for all $m \in \mbb{N}$.
\end{MainTheorem}

\begin{remark}
\label{remark:WhyCone}
Functions $\varphi \in \mcal{D} \cap \mcal{C}_{a}$ satisfy $| \varphi |_{s} \leqs a$ (though this is not the defining condition for the cone).
One might expect the conditional memory loss in Theorem~\ref{thrm:local} to hold for all functions $\varphi \in \mcal{D}$ (that is, the condition $| \varphi |_{s} < \infty$ is sufficient).
However, control on the strong seminorm is needed because of the presence of holes.
For example, suppose we work with piecewise-smooth expanding circle maps and let $| \cdot |_{s}$ be the total variation.
In this case, a probability density function with large total variation could have very small support.
All of the mass associated with such a probability density function could fall into even a small hole in a single step.
\end{remark}

\begin{MainTheorem}[General global result]
\label{thrm:global}

Assume $\mscr{M}$ and $\mscr{H}$ satisfy (\ref{A:1})-(\ref{A:2}).  
Let $\zeta_1 \in (0,1)$ and $\zeta_2 \in (1,\infty).$  
Let $\hat{\gamma}: [a,b] \to \mscr{M} \cap \mscr{E}(\zeta_1,\zeta_2)$ be a continuous map such that $\hat{\gamma} (t)$ satisfies~\pref{A:3} for all $t \in [a,b]$.
There exists a convex cone $\mcal{C}_{a_{\hat{\gamma}}} \subset L^1(\lambda)$, as well as associated constants $\Sigma_{\hat{\gamma}} > 0$, $\vep_{\hat{\gamma}} > 0$, $\Lambda_{\hat{\gamma}} \in (0,1)$, and $C_{\hat{\gamma}} > 0$, such that the following holds.  
For any map $\gamma_{\mscr{H}} : [a,b] \to \mscr{H}$ and any finite or infinite increasing sequence $(t_j)$ in $[a,b]$, if $\max_j(t_{j+1} - t_j) \leqs \Sigma_{\hat{\gamma}}$ and if $\lambda(\gamma_{\mscr{H}}(t_j)) \leqs \vep_{\hat{\gamma}}$ for all relevant $j$, then the nonstationary open dynamical system defined by the maps
\begin{equation*}
\widehat{F}_m = \hat{\gamma}(t_m) \circ \cdots \circ \hat{\gamma}(t_1)
\end{equation*}
and holes $H_j = \gamma_{\mscr{H}}(t_j)$ exhibits conditional memory loss at an exponential rate.  
Precisely, for every $\varphi, \psi \in \mcal{D} \cap \mcal{C}_{a_{\hat{\gamma}}}$, we have
\begin{equation*}
\vnorm{\mcal{R}_{F_m}(\phi) - \mcal{R}_{F_m}(\psi)}_{L^1(\lambda)} \leqs C_{\hat{\gamma}} \Lambda_{\hat{\gamma}}^m
\end{equation*}
for all relevant $m$.
\end{MainTheorem}

\begin{remark}
Theorem~\ref{thrm:global} is but a sample of possible global results - many generalizations may be formulated.
For example, $\hat{\gamma}$ may admit discontinuities.
\end{remark}

\begin{remark}
The quantity $\Sigma_{\hat{\gamma}}$ is a measure of the speed at which the curve $\hat{\gamma}$ is traversed.
Theorem~\ref{thrm:global} yields conditional memory loss at an exponential rate provided the traversal speed is sufficiently small.
Dobbs and Stenlund~\cite{Dobbs-Stenlund-2017} study the quasi-static $\Sigma_{\hat{\gamma}} \to 0$ limit in the context of nonstationary systems on the circle.
\end{remark}

\subsection{Proof of Theorem \ref{thrm:local}}\label{ssection:pf} The proof involves analyzing the action of $(\mcal{L}_{F_{m}})$ on a suitable convex cone $\mathcal{C}_{a}$ of functions in $L^{1}(\lambda)$ that is defined in terms of the seminorm $|\cdot|_{s}.$  We begin with parameter selection. Let $M = M_{1} M_{2}$, where $M_{1}$ and $M_{2}$ are the constants in~\pref{A:2}. First, introduce 
\begin{itemize}[leftmargin=*, labelindent=\parindent]
\item
$a$: aperture of the cone $\mathcal{C}_{a}$,
\item
$T$: a mixing time,
\item
$n_{0}$: controls the measure-theoretic diameter of the mixing partition $\mscr{Q}_{n_{0}}$ associated with $\hat{g}$,
\item
$\sigma$: facilitates showing that suitable compositions of transfer operators map $\mathcal{C}_{a}$ strictly inside itself,
\end{itemize}
so that~(\ref{P:1})--(\ref{P:3}) are satisfied simultaneously.  
Second, choose $\delta_0$ and $\vep_0.$
We now carry this out in detail.

The important parameter inequalities are the following.
\begin{enumerate}[leftmargin=*, topsep=1ex, itemsep=0.7ex, label=\textbf{(P\arabic*)}, ref=P\arabic*]
\item\label{P:1}
$0<\sigma<1$.

\item\label{P:2}
$T \in \mbb{N}$ is a positive integer multiple of $T_{1}$ and $T\geqs E(\mscr{Q}_{n_0},\zeta_{1},\zeta_{2})$.

\item\label{P:3}
$a>0$ satisfies
\begin{equation*}
\zeta_{1}-\frac{\zeta_{2}a}{M}\cdot\diam_{\lambda}(\mscr{Q}_{n_0}) > 0, \qquad
(a \theta_{LY}^{T} + C_{\LY}) \left( \zeta_{1} - \frac{\zeta_{2}a}{M} \cdot \diam_{\lambda}(\mscr{Q}_{n_0}) \right)^{-1} \leqs \sigma a.
\end{equation*}
\end{enumerate}
To see that (\ref{P:1})--(\ref{P:3}) may be satisfied simultaneously, proceed in the following order.
\begin{itemize}[leftmargin=*, labelindent=\parindent]
\item
Select $\sigma \in (0,1)$.

\item
Choose $T$ sufficiently large so that $\theta_{LY}^{T}/(\zeta_{1}/2)<\sigma$.

\item
Choose $a$ sufficiently large so that 
\begin{equation*}
\frac{a\theta_{LY}^{T}+C_{\LY}}{\zeta_{1}/2}\leqs \sigma a.
\end{equation*}

\item Choose $\diam_{\lambda}(\mscr{Q}_{n_0})$ sufficiently small so that $\zeta_{2} a M^{-1} \cdot\diam_{\lambda}(\mscr{Q}_{n_0})\leqs \zeta_{1}/2$.

\item Increase $T$ (if necessary) so that $T\geqs E(\mscr{Q}_{n_0},\zeta_{1},\zeta_{2}).$
\end{itemize}
We now choose $\delta_{0}$ and $\varepsilon_{0}$ using (\ref{A:3}) with base map $\hat{g}$ and time $T$.
Further, assume $\delta_{0}$ is sufficiently small so that $B(\hat{g},\delta_0) \subset \mscr{N}(\hat{g})$.
For every sequence $(\hat{f}_k)_{k=1}^T$ in $B(\hat{g},\delta_0)$ and every sequence $(H_{j})_{j=1}^{T}$ of holes in $\mscr{H}$ with $\lambda (H_{j}) \leqs \vep_{0}$ for all $1 \leqs j \leqs T$, we therefore have
\begin{equation}\label{mix:e0}
\zeta_{1} < \frac{\lambda(J_1 \cap (F_T)^{-1}(J_2))}{\lambda(J_1)\lambda(J_2)} < \zeta_2
\end{equation}
for all $J_1,J_2 \in \mscr{Q}_{n_0}.$

\subsection{Invariance of a suitable convex cone}\label{ssection:cone}

Define
\begin{equation}
\label{e:DefCone}
\mathcal{C}_{a}=\vset{\varphi\in L^{1}(\lambda) : \varphi\geqs 0 , \;\: \varphi\not\equiv 0 , \;\: |\varphi|_{s}\leqs a\mbb{E}[\varphi|\mscr{Q}_{n_0}]}.
\end{equation}
We study the action of $\mathcal{L}_{F_{m}}$ on $\mathcal{C}_{a}.$  For positive integers $m>i$, define
\begin{equation*}
\hat{F}_{m,i} = \hat{f}_m \circ \hat{f}_{m-1} \circ \cdots \circ \hat{f}_{i} , \;\: F_{m,i} = f_m \circ f_{m-1} \circ \cdots \circ f_{i},
\end{equation*}
where $f_{k}$ is the open system corresponding to $\hat{f}_{k}$ for $i\leqs k\leqs m.$
Note that $\hat{F}_{m,1} = \hat{F}_{m}$ and $F_{m,1} = F_{m}$.

\begin{lemma}\label{L:control}
Assume the setting of Section ~\ref{ssection:pf}.  For every $\varphi\in \mathcal{C}_{a}$ and $i\in \mbb{N}$ we have
\begin{equation}
\label{E:control}
\left( \zeta_{1} - \frac{\zeta_{2}a}{M} \cdot \diam_{\lambda}(\mscr{Q}_{n_0}) \right) \int_{X} \varphi \, \mrm{d} \lambda \leqs \mbb{E} [\mathcal{L}_{F_{i+T-1,i}} (\varphi) | \mscr{Q}_{n_0}] \leqs \zeta_2 \left( 1 + \frac{a}{M} \cdot \diam_{\lambda} (\mscr{Q}_{n_0}) \right) \int_{X} \varphi \, \mrm{d} \lambda.
\end{equation}
\end{lemma}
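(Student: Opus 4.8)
The plan is to estimate the conditional expectation $\mbb{E}[\mcal{L}_{F_{i+T-1,i}}(\varphi)|\mscr{Q}_{n_0}]$ on each atom $J \in \mscr{Q}_{n_0}$ by unpacking the defining duality of the transfer operator. Fix an atom $J = J_2 \in \mscr{Q}_{n_0}$; then
\begin{equation*}
\int_{J_2} \mcal{L}_{F_{i+T-1,i}}(\varphi)\,\mrm{d}\lambda = \int \varphi \cdot (\mathbf{1}_{J_2} \circ F_{i+T-1,i})\,\mrm{d}\lambda = \int_{(F_{i+T-1,i})^{-1}(J_2)} \varphi\,\mrm{d}\lambda.
\end{equation*}
The first step is therefore to decompose the domain of the integral against the partition $\mscr{Q}_{n_0}$: write $\int_{(F_{i+T-1,i})^{-1}(J_2)} \varphi\,\mrm{d}\lambda = \sum_{J_1 \in \mscr{Q}_{n_0}} \int_{J_1 \cap (F_{i+T-1,i})^{-1}(J_2)} \varphi\,\mrm{d}\lambda$. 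The key is that $F_{i+T-1,i}$ is a composition of $T$ maps drawn from $B(\hat g,\delta_0)$ with $T$ holes of measure at most $\vep_0$, and $T \geqs E(\mscr{Q}_{n_0},\zeta_1,\zeta_2)$, so by~\pref{A:3} (invoked via the stable-mixing conclusion~\eqref{mix:e0}, which holds for \emph{any} consecutive block of $T$ maps, not just the first, since the hypotheses are uniform in the sequence) we have $\zeta_1 \lambda(J_1)\lambda(J_2) < \lambda(J_1 \cap (F_{i+T-1,i})^{-1}(J_2)) < \zeta_2 \lambda(J_1)\lambda(J_2)$ for all atoms $J_1, J_2$.

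Next I would control $\int_{J_1 \cap (F_{i+T-1,i})^{-1}(J_2)} \varphi\,\mrm{d}\lambda$ from above and below by $\left(\frac{1}{\lambda(J_1)}\int_{J_1}\varphi\,\mrm{d}\lambda\right) \lambda(J_1 \cap (F_{i+T-1,i})^{-1}(J_2))$ up to an error governed by the oscillation of $\varphi$ on $J_1$. Precisely, for any measurable $E \subset J_1$,
\begin{equation*}
\left| \int_E \varphi\,\mrm{d}\lambda - \frac{\lambda(E)}{\lambda(J_1)}\int_{J_1}\varphi\,\mrm{d}\lambda \right| \leqs \lambda(E) \cdot \operatornamewithlimits{ess\,sup}_{z \in J_1}\left| \varphi(z) - \frac{1}{\lambda(J_1)}\int_{J_1}\varphi\,\mrm{d}\lambda \right| \leqs \lambda(E) \cdot \frac{1}{M_2}\,|\varphi|J_1|_s,
\end{equation*}
where the last inequality is exactly~\eqref{semi:strong}. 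Summing over $J_1$, applying~\eqref{semi:decomp} to get $\sum_{J_1}|\varphi|J_1|_s \leqs \frac{1}{M_1}|\varphi|_s$, and using $\lambda(J_1 \cap (F_{i+T-1,i})^{-1}(J_2)) \leqs \lambda(J_1) \leqs \diam_\lambda(\mscr{Q}_{n_0})$ in the error terms, I obtain
\begin{equation*}
\int_{J_2}\mcal{L}_{F_{i+T-1,i}}(\varphi)\,\mrm{d}\lambda \in \left[ \zeta_1 \lambda(J_2)\!\int_X\!\varphi\,\mrm{d}\lambda - \frac{\diam_\lambda(\mscr{Q}_{n_0})}{M}|\varphi|_s,\; \zeta_2 \lambda(J_2)\!\int_X\!\varphi\,\mrm{d}\lambda + \frac{\diam_\lambda(\mscr{Q}_{n_0})}{M}|\varphi|_s \right],
\end{equation*}
using $\sum_{J_1}\int_{J_1}\varphi\,\mrm{d}\lambda = \int_X\varphi\,\mrm{d}\lambda$ and $M = M_1 M_2$. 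Dividing by $\lambda(J_2)$ and noting this bound is uniform over $J_2 \in \mscr{Q}_{n_0}$ gives pointwise $\lambda$-a.e. control of $\mbb{E}[\mcal{L}_{F_{i+T-1,i}}(\varphi)|\mscr{Q}_{n_0}]$; but I need the error term expressed without the $\lambda(J_2)^{-1}$ blow-up, so I would instead keep the estimate at the level of $\lambda(J_2)$-weighted quantities, or equivalently absorb $\diam_\lambda(\mscr{Q}_{n_0})/\lambda(J_2)$ — here is where it matters that the cone membership $\varphi \in \mcal{C}_a$ supplies $|\varphi|_s \leqs a\,\mbb{E}[\varphi|\mscr{Q}_{n_0}]$, hence $\frac{1}{\lambda(J_2)}\int_{J_2}\varphi\,\mrm{d}\lambda$-type lower bounds.

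Finally I would feed in the cone constraint to close the inequality in the stated form. Since $\varphi \in \mcal{C}_a$, we have $|\varphi|_s \leqs a\,\mbb{E}[\varphi|\mscr{Q}_{n_0}]$ as a function, so on the atom $J_2$ the error term $\frac{\diam_\lambda(\mscr{Q}_{n_0})}{M}|\varphi|_s$ can be compared — but more robustly, $|\varphi|_s \leqs a \sup_{\mscr{Q}_{n_0}}\mbb{E}[\varphi|\mscr{Q}_{n_0}]$ and $\int_X\varphi\,\mrm{d}\lambda \geqs \sum_{J} \lambda(J)\,\mbb{E}[\varphi|\mscr{Q}_{n_0}]|_J$, so by a pigeonhole/averaging step $\sup_{\mscr{Q}_{n_0}}\mbb{E}[\varphi|\mscr{Q}_{n_0}] \geqs \int_X\varphi\,\mrm{d}\lambda$; actually the cleanest route is: $|\varphi|_s \leqs a\,\mbb{E}[\varphi|\mscr{Q}_{n_0}]$ integrated gives $\int_X |\varphi|_s$-weighted bounds, but since $|\varphi|_s$ is a constant, integrating the cone inequality over $X$ yields $|\varphi|_s \leqs a\int_X\varphi\,\mrm{d}\lambda$. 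Substituting $|\varphi|_s \leqs a\int_X\varphi\,\mrm{d}\lambda$ into the error term $\frac{\diam_\lambda(\mscr{Q}_{n_0})}{M}|\varphi|_s \leqs \frac{a\,\diam_\lambda(\mscr{Q}_{n_0})}{M}\int_X\varphi\,\mrm{d}\lambda$, dividing the whole estimate by $\lambda(J_2)$ and using $\lambda(J_2) \leqs \diam_\lambda(\mscr{Q}_{n_0}) \leqs 1$ to bound $\frac{1}{\lambda(J_2)}\cdot\frac{a\,\diam_\lambda(\mscr{Q}_{n_0})}{M}\int_X\varphi\,\mrm{d}\lambda \leqs \frac{a}{M}\cdot\frac{\diam_\lambda(\mscr{Q}_{n_0})}{\lambda(J_2)}\int_X\varphi$ — no; the correct bookkeeping keeps everything multiplied by $\lambda(J_2)$ and only at the end observes $\mbb{E}[\cdot|\mscr{Q}_{n_0}]|_{J_2} = \lambda(J_2)^{-1}\int_{J_2}(\cdot)$, so the error divided by $\lambda(J_2)$ is $\leqs \frac{a}{M}\diam_\lambda(\mscr{Q}_{n_0})\cdot\frac{1}{\lambda(J_2)}\int_X\varphi\,\mrm{d}\lambda$, and one absorbs $\diam_\lambda(\mscr{Q}_{n_0})/\lambda(J_2) \leqs 1$ is false in general — rather, the error term before dividing is $\leqs \frac{a}{M}\diam_\lambda(\mscr{Q}_{n_0})\int_X\varphi\,\mrm{d}\lambda$ which must be compared against $\zeta_1\lambda(J_2)\int_X\varphi\,\mrm{d}\lambda$, requiring $\lambda(J_2)$ bounded below, which fails. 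The resolution, and the step I expect to be the main obstacle, is that the error term should carry $\lambda(J_2)$ too: re-examining, $\int_{J_1\cap(F)^{-1}(J_2)}\varphi$ minus its average involves $\operatorname{osc}$ over $J_1$ weighted by $\lambda(J_1\cap(F)^{-1}(J_2)) \leqs \zeta_2\lambda(J_1)\lambda(J_2)$, which \emph{does} produce a factor $\lambda(J_2)$; so the summed error is $\leqs \zeta_2\lambda(J_2)\frac{\diam_\lambda(\mscr{Q}_{n_0})}{M}|\varphi|_s \leqs \zeta_2\lambda(J_2)\frac{a\,\diam_\lambda(\mscr{Q}_{n_0})}{M}\int_X\varphi\,\mrm{d}\lambda$, and dividing by $\lambda(J_2)$ cleanly gives the claimed $\left(\zeta_1 - \frac{\zeta_2 a}{M}\diam_\lambda(\mscr{Q}_{n_0})\right)\int_X\varphi\,\mrm{d}\lambda \leqs \mbb{E}[\mcal{L}_{F_{i+T-1,i}}(\varphi)|\mscr{Q}_{n_0}] \leqs \zeta_2\left(1 + \frac{a}{M}\diam_\lambda(\mscr{Q}_{n_0})\right)\int_X\varphi\,\mrm{d}\lambda$, a.e. on each $J_2$. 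Getting this $\lambda(J_2)$-factor tracking right throughout — particularly using the mixing bound $\lambda(J_1\cap(F)^{-1}(J_2)) \leqs \zeta_2\lambda(J_1)\lambda(J_2)$ inside the oscillation error rather than the crude $\lambda(J_1)$ — is the delicate point; everything else is the duality identity, the two parts of~\pref{A:2}, and integrating the cone inequality.
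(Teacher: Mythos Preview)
Your proposal is correct and follows essentially the same route as the paper: express $\mbb{E}[\mcal{L}_F(\varphi)\mid\mscr{Q}_{n_0}]$ on an atom via duality, decompose the preimage integral over the atoms $J_1$, use~\eqref{semi:strong} to replace $\varphi$ by its atom-average up to an $M_2^{-1}|\varphi|J_1|_s$ error, bound $\lambda(J_1\cap F^{-1}(J_2))$ via the stable-mixing estimate (crucially using the \emph{upper} bound $\zeta_2\lambda(J_1)\lambda(J_2)$ in the error term to capture the $\lambda(J_2)$ factor, exactly as you eventually realize), sum using~\eqref{semi:decomp}, and close with $|\varphi|_s\leqs a\int_X\varphi\,\mrm{d}\lambda$ from integrating the cone condition. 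The paper's proof is just a cleaned-up linear version of your final paragraph, obtaining the pointwise bound on $\varphi(z)$ first and then integrating, so once you prune the exploratory detours your argument is the paper's.
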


\begin{remark}
In light of~\pref{P:3}, Lemma~\ref{L:control} asserts that for any density $\varphi \in \mcal{D} \cap \mcal{C}_{a}$, the open transfer operator $\mcal{L}_{F_{i+T-1,i}}$ maps $\varphi$ to a function with uniformly positive conditional expectation. 
\end{remark}

\begin{proof}
Write $F = F_{i+T-1,i}.$  For $x\in X$, let $Q(x)$ denote the element of $\mscr{Q}_{n_0}$ that contains $x$.  We have
\begin{align}\label{int:b1}
\mbb{E}[\mathcal{L}_F(\varphi)|\mscr{Q}_{n_0}](x) &= \frac{1}{\lambda(Q(x))}\int_{Q(x)} \mathcal{L}_F(\varphi) \, \mrm{d} \lambda \nonumber\\
&= \frac{1}{\lambda(Q(x))}\int_{F^{-1}(Q(x))}\varphi \, \mrm{d}\lambda \nonumber\\
&= \frac{1}{\lambda(Q(x))}\sum_{Q'\in \mscr{Q}_{n_0}}\int_{Q'\cap F^{-1}(Q(x))} \varphi(z) \, \mrm{d}\lambda(z). 
\end{align}
Fix $Q' \in \mscr{Q}_{n_{0}}$.
Using~\pref{A:2}, for almost every $z\in Q'\cap F^{-1}(Q(x))$ we have
\begin{align}\label{int:b2}
\varphi(z) &\geqs \frac{1}{\lambda(Q')}\int_{Q'} \varphi \, \mrm{d}\lambda - \frac{1}{M_2} |\varphi | Q'|_s \nonumber\\
&= \frac{1}{\lambda(Q')}\left(\int_{Q'} \varphi \, \mrm{d}\lambda -\frac{\lambda(Q')}{M_2} |\varphi | Q'|_s \right).
\end{align}
Summing over $Q' \in \mscr{Q}_{n_{0}}$ and using~(\ref{int:b1}), (\ref{int:b2}), (\ref{mix:e0}), \pref{A:2}, and the fact that $\varphi \in \mcal{C}_{a}$ implies $| \varphi |_{s} \leqs a \int_{X} \varphi \, \mrm{d} \lambda$, we have
\begin{align*}
\mbb{E}[\mathcal{L}_F(\varphi)|\mscr{Q}_{n_0}](x) &\geqs \frac{1}{\lambda(Q(x))} \sum_{Q'\in \mscr{Q}_{n_0}}\int_{Q'\cap F^{-1}(Q(x))}\frac{1}{\lambda(Q')}\left(\int_{Q'} \varphi \, \mrm{d}\lambda - \frac{\lambda(Q')}{M_2} |\varphi | Q'|_s \right) \mrm{d} \lambda(z)\\
&= \sum_{Q'\in \mscr{Q}_{n_0}} \frac{\lambda(Q'\cap F^{-1}(Q(x)))}{\lambda(Q(x))\lambda(Q')}\left(\int_{Q'} \varphi \, \mrm{d} \lambda - \frac{\lambda(Q')}{M_2} |\varphi | Q'|_s \right)\\
&\geqs \zeta_1\int_X \varphi \, \mrm{d}\lambda - \frac{\zeta_2}{M_{1} M_{2}}\cdot \diam_{\lambda}(\mscr{Q}_{n_0})\cdot|\varphi|_s\\
&\geqs \zeta_1\int_X \varphi \, \mrm{d}\lambda - \frac{\zeta_2a}{M}\cdot \diam_{\lambda}(\mscr{Q}_{n_0})\cdot\int_X \varphi \, \mrm{d}\lambda\\
&= \left(\zeta_1 - \frac{\zeta_2a}{M}\cdot\diam_{\lambda}(\mscr{Q}_{n_0})\right)\int_X \varphi \, \mrm{d}\lambda.
\end{align*}
The upper bound
\begin{equation*}
\mbb{E}[\mathcal{L}_F(\varphi)|\mscr{Q}_{n_0}](x) \leqs \zeta_2\left(1+\frac{a}{M}\cdot\diam_{\lambda}(\mscr{Q}_{n_0})\right)\int_X \varphi \, \mrm{d}\lambda
\end{equation*}
follows from an analogous line of reasoning.
\end{proof}

\begin{proposition}\label{cone:contract}
In the setting of Lemma ~\ref{L:control}, for every $i\in\mbb{N}$, we have
\begin{equation*}
\mathcal{L}_{F_{i+T-1,i}}(\mcal{C}_a)\subset \mcal{C}_{\sigma a}.
\end{equation*}
\end{proposition}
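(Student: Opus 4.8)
The plan is to verify the three defining properties of $\mathcal{C}_{\sigma a}$ for the function $\mathcal{L}_{F_{i+T-1,i}}(\varphi)$, where $\varphi \in \mathcal{C}_{a}$ and $i \in \mbb{N}$ are arbitrary; throughout write $F = F_{i+T-1,i}$. Nonnegativity of $\mathcal{L}_F(\varphi)$ is immediate from the definition of the transfer operator tested against nonnegative functions. For $\mathcal{L}_F(\varphi) \not\equiv 0$, observe that $\varphi \geqs 0$ and $\varphi \not\equiv 0$ give $\int_{X} \varphi \, \mrm{d}\lambda > 0$, while the first inequality in~\pref{P:3} makes the coefficient $\zeta_{1} - \frac{\zeta_{2} a}{M} \cdot \diam_{\lambda}(\mscr{Q}_{n_0})$ strictly positive; hence the lower bound in Lemma~\ref{L:control} forces $\mbb{E}[\mathcal{L}_F(\varphi)|\mscr{Q}_{n_0}]$ to be $\lambda$-a.e.\ bounded below by a positive constant, so $\mathcal{L}_F(\varphi)$ is not $\lambda$-a.e.\ zero. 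The only substantive point is the seminorm bound $|\mathcal{L}_F(\varphi)|_{s} \leqs \sigma a \, \mbb{E}[\mathcal{L}_F(\varphi)|\mscr{Q}_{n_0}]$, understood $\lambda$-a.e.

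To obtain this bound I would first record two consequences of $\varphi \in \mathcal{C}_{a}$: that $\norm{\varphi}_{L^{1}(\lambda)} = \int_{X} \varphi \, \mrm{d}\lambda$ (since $\varphi \geqs 0$), and that $|\varphi|_{s} \leqs a \int_{X} \varphi \, \mrm{d}\lambda$ (the elementary observation already used in the proof of Lemma~\ref{L:control}, that averaging $\mbb{E}[\varphi|\mscr{Q}_{n_0}]$ against the weights $\lambda(Q)$ bounds its essential infimum by $\int_{X} \varphi \, \mrm{d}\lambda$). Next I would invoke~\pref{A:1}. By~\pref{P:2} we may write $T = k T_{1}$ for some $k \in \mbb{N}$; re-indexing the sequences $(\hat{f}_{j})$ and $(H_{j})$ so that they begin at time $i$ is legitimate, because the neighborhood, the time $T_{1}$, and the constants $\theta_{\LY}, C_{\LY}$ supplied by~\pref{A:1} depend only on the base map $\hat{g}$ and because $B(\hat{g},\delta_{0}) \subset \mscr{N}(\hat{g})$; this re-indexing identifies the transfer operator of the shifted $T$-step open system with $\mathcal{L}_F$, so~\eqref{e:LY-DF} yields
\[
|\mathcal{L}_F(\varphi)|_{s} \leqs \theta_{\LY}^{T} |\varphi|_{s} + C_{\LY} \norm{\varphi}_{L^{1}(\lambda)} \leqs (a \theta_{\LY}^{T} + C_{\LY}) \int_{X} \varphi \, \mrm{d}\lambda .
\]

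Finally I would combine this with the lower bound from Lemma~\ref{L:control}, namely $\mbb{E}[\mathcal{L}_F(\varphi)|\mscr{Q}_{n_0}](x) \geqs \bigl( \zeta_{1} - \frac{\zeta_{2} a}{M} \cdot \diam_{\lambda}(\mscr{Q}_{n_0}) \bigr) \int_{X} \varphi \, \mrm{d}\lambda$ for $\lambda$-a.e.\ $x$. By the second inequality in~\pref{P:3},
\[
|\mathcal{L}_F(\varphi)|_{s} \leqs (a \theta_{\LY}^{T} + C_{\LY}) \int_{X} \varphi \, \mrm{d}\lambda \leqs \sigma a \Bigl( \zeta_{1} - \frac{\zeta_{2} a}{M} \cdot \diam_{\lambda}(\mscr{Q}_{n_0}) \Bigr) \int_{X} \varphi \, \mrm{d}\lambda \leqs \sigma a \, \mbb{E}[\mathcal{L}_F(\varphi)|\mscr{Q}_{n_0}](x)
\]
for $\lambda$-a.e.\ $x$, which is precisely the membership $\mathcal{L}_F(\varphi) \in \mathcal{C}_{\sigma a}$; since $i$ was arbitrary, the proposition follows. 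I do not expect a genuine obstacle: the only steps requiring a moment's care are the re-indexing that permits applying~\pref{A:1} over the window $[i, i+T-1]$ rather than $[1,T]$, and noting that~\pref{P:2} makes $T$ an integer multiple of $T_{1}$ so that~\eqref{e:LY-DF} is available in the form used; the remainder is just the chain of inequalities packaged into~\pref{P:1}--\pref{P:3}.
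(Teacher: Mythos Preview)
Your proof is correct and follows essentially the same approach as the paper: apply the Lasota--Yorke inequality~\eqref{e:LY-DF}, use $|\varphi|_{s} \leqs a \int_{X} \varphi \, \mrm{d}\lambda$, invoke the lower bound of Lemma~\ref{L:control}, and conclude via~\pref{P:3}. You are in fact more careful than the paper about two points it leaves implicit, namely the verification that $\mathcal{L}_{F}(\varphi) \not\equiv 0$ and the re-indexing justification for applying~\pref{A:1} over the window $[i,\,i+T-1]$.
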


\begin{proof}
Write $F=F_{i+T-1,i}$ and let $\varphi \in \mathcal{C}_{a}.$  Using~(\ref{e:LY-DF}), (\ref{E:control}), a basic property of the conditional expectation, and control~\pref{P:3} on the cone aperture $a$, we have
\begin{align*}
|\mathcal{L}_F(\varphi)|_{s} &\leqs \theta_{LY}^{T}|\varphi|_{s} + C_{\LY}\vnorm{\varphi}_{L^{1}(\lambda)}\\
&\leqs (a\theta_{LY}^{T}+C_{\LY})\vnorm{\varphi}_{L^{1}(\lambda)}\\
&\leqs (a \theta_{LY}^{T} + C_{\LY}) \left( \zeta_{1} - \frac{\zeta_{2} a}{M} \cdot \diam_{\lambda}(\mscr{Q}_{n_0}) \right)^{-1} \mbb{E}[\mathcal{L}_{F} (\varphi)|\mscr{Q}_{n_0}]\\
&\leqs \sigma a\mbb{E}[\mathcal{L}_{F}(\varphi)|\mscr{Q}_{n_0}].
\end{align*}
\end{proof}

\subsection{Cones, Hilbert metrics, and positive operators}

Following~\cite{Liverani_Deschamps_2003} and~\cite{LSV_1998}, we review a theory of cones developed by Birkhoff~\cite{Birkhoff-Lattice-1979}.
We will use this theory to show that $\mathcal{L}_{F_{i+T-1,i}}$ acts contractively on $\mcal{C}_{a}$ with respect to a projective metric known as the Hilbert metric.

\begin{definition} Let $\mscr{V}$ be a real vector space.  A \textit{convex cone} is a subset $\mathcal{C}\subset \mscr{V}$ with the following properties.
\begin{enumerate}[topsep=1ex, itemsep=0.5ex, label=(\alph*), ref=\alph*, series=brees]

\item
$\mathcal{C}\cap\mathcal{-C}=\emptyset$.

\item
$\gamma\mathcal{C} = \mathcal{C}$ for all $\gamma > 0$.

\item
$\mathcal{C}$ is a convex set.

\item
For all $\varphi,\psi \in \mathcal{C}$, every $c\in \mbb{R}$, and every sequence $(c_n)$ in $\mbb{R}$ such that $c_n \to c$, if $\varphi - c_n\psi \in \mathcal{C}$ for all $n$, then $\varphi - c\psi \in \mathcal{C}\cup\{0\}.$

\end{enumerate}
\end{definition}

\begin{definition} Let $\mathcal{C}$ be a convex cone.  The {\itshape Hilbert metric} $d_{\mathcal{C}}$ is defined on $\mathcal{C}$ by 
\begin{equation*}
d_{\mathcal{C}} (\varphi,\psi) = \log\left(\frac{\inf\{c>0 : c\varphi - \psi \in \mathcal{C}\}}{\sup\{r>0 : \psi - r\varphi \in \mathcal{C}\}}\right).
\end{equation*}
\end{definition}

The following result of Birkhoff asserts that in the current context, a positive linear operator is a contraction in the Hilbert metric provided the diameter of the image is finite.

\begin{theorem}[\cite{Birkhoff-Lattice-1979}]
\label{Birk:cone}
Let $\mscr{V}_1$ and $\mscr{V}_2$ be real vector spaces containing convex cones $\mathcal{C}_1$ and $\mathcal{C}_2$, respectively.  Let $\mathcal{L}:\mscr{V}_1 \to \mscr{V}_2$ be a positive linear operator, meaning $\mathcal{L}(\mathcal{C}_1) \subset \mathcal{C}_2.$  Define 
\begin{equation*}
\Delta = \sup_{\varphi^*,\psi^* \in \mathcal{L}(\mathcal{C}_1)} d_{\mathcal{C}_{2}} (\varphi^*,\psi^*).
\end{equation*}
Then for all $\varphi,\psi \in \mathcal{C}_1,$ we have
\begin{equation*}
d_{\mathcal{C}_2}(\mathcal{L}\varphi,\mathcal{L}\psi) \leqs \tanh\left(\frac{\Delta}{4}\right) d_{\mathcal{C}_1(\varphi,\psi)}.
\end{equation*}
\end{theorem}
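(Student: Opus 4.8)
The plan is to reproduce Birkhoff's classical argument, reducing the statement to a one-variable inequality about cross-ratios. First I would dispose of the degenerate cases. If $d_{\mathcal{C}_1}(\varphi,\psi)=0$, then $\varphi$ and $\psi$ are positive multiples of one another, hence so are $\mathcal{L}\varphi$ and $\mathcal{L}\psi$, and both sides of the claimed inequality vanish. If $d_{\mathcal{C}_1}(\varphi,\psi)=\infty$ and $\Delta<\infty$, the right-hand side is infinite and there is nothing to prove. The case $\Delta=\infty$ will follow from the unconditional bound $d_{\mathcal{C}_2}(\mathcal{L}\varphi,\mathcal{L}\psi)\le d_{\mathcal{C}_1}(\varphi,\psi)$ produced along the way (every positive linear operator is nonexpansive in the Hilbert metric). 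So from now on I assume $0<\Delta<\infty$ and $0<d_{\mathcal{C}_1}(\varphi,\psi)<\infty$.

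Next, set $\alpha=\sup\{\lambda>0:\psi-\lambda\varphi\in\mathcal{C}_1\}$ and $\beta=\inf\{\mu>0:\mu\varphi-\psi\in\mathcal{C}_1\}$, so that $0<\alpha\le\beta<\infty$ and $d_{\mathcal{C}_1}(\varphi,\psi)=\log(\beta/\alpha)$. By the closure axiom for convex cones, $\psi-\alpha\varphi$ and $\beta\varphi-\psi$ belong to $\mathcal{C}_1\cup\{0\}$; if either is $0$ we are back in the proportional case, so both lie in $\mathcal{C}_1$. Since $\mathcal{L}$ is linear and positive, the vectors $w_1:=\mathcal{L}\psi-\alpha\mathcal{L}\varphi=\mathcal{L}(\psi-\alpha\varphi)$ and $w_2:=\beta\mathcal{L}\varphi-\mathcal{L}\psi=\mathcal{L}(\beta\varphi-\psi)$ lie in $\mathcal{L}(\mathcal{C}_1)\subset\mathcal{C}_2$, and in particular are nonzero since $0\notin\mathcal{C}_2$. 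The crucial observation is that, as $w_1,w_2\in\mathcal{L}(\mathcal{C}_1)$, we have $d_{\mathcal{C}_2}(w_1,w_2)\le\Delta$ by the very definition of $\Delta$.

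I would then invert the relations above, writing $(\beta-\alpha)\mathcal{L}\varphi=w_1+w_2$ and $(\beta-\alpha)\mathcal{L}\psi=\beta w_1+\alpha w_2$, and compute $d_{\mathcal{C}_2}(\mathcal{L}\varphi,\mathcal{L}\psi)$ directly from the cone axioms. With $p:=\sup\{x>0:w_2-xw_1\in\mathcal{C}_2\}$ and $q:=\inf\{y>0:yw_1-w_2\in\mathcal{C}_2\}$, so that $0<p\le q$ and $q/p=e^{d_{\mathcal{C}_2}(w_1,w_2)}\le e^{\Delta}$, a short manipulation with the cone axioms shows that $\mathcal{L}\psi-t\mathcal{L}\varphi\in\mathcal{C}_2$ exactly for $t<(\beta+q\alpha)/(1+q)$ and $t\mathcal{L}\varphi-\mathcal{L}\psi\in\mathcal{C}_2$ exactly for $t>(\beta+p\alpha)/(1+p)$, whence
\[
d_{\mathcal{C}_2}(\mathcal{L}\varphi,\mathcal{L}\psi)=\log\frac{(r+p)(1+q)}{(r+q)(1+p)},\qquad r:=\frac{\beta}{\alpha}=e^{d_{\mathcal{C}_1}(\varphi,\psi)}\ge1.
\]

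It then remains to establish the scalar inequality
\[
\frac{(r+p)(1+q)}{(r+q)(1+p)}\le r^{\tanh(\Delta/4)}\qquad\text{whenever }r\ge1,\ 0<p\le q,\ \frac{q}{p}\le e^{\Delta},
\]
and this is where the real work lies. Fixing $r$ and the ratio $R:=q/p$, I would maximize the left-hand side over $p$; differentiating in $p$ and simplifying shows the maximum is attained at $p=\sqrt{r/R}$. Substituting this, together with $s:=\sqrt r\ge1$ and $\tau:=\sqrt R$, reduces the claim to $\frac{1+s\tau}{s+\tau}\le s^{(\tau-1)/(\tau+1)}$, once one notes that $\tanh(\tfrac{1}{4}\log R)=(\tau-1)/(\tau+1)$ and invokes monotonicity of $\tanh$ together with $R\le e^{\Delta}$ and $r\ge1$. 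Finally, writing $\phi(s)=\frac{\tau-1}{\tau+1}\log s-\log(1+s\tau)+\log(s+\tau)$, one checks $\phi(1)=0$, and clearing denominators reveals that $\phi'(s)$ is a positive constant times $(s-1)^2$; hence $\phi$ is nondecreasing on $[1,\infty)$, so $\phi\ge0$ there, which is the desired inequality. Running the same chain of estimates with the constraint $q/p\le e^{\Delta}$ dropped (equivalently $\Delta=\infty$) yields the unconditional bound used above. I expect the main obstacle to be exactly this last optimization-and-factorization step: choosing the change of variables so that the exponent emerges as precisely $\tanh(\Delta/4)$, and spotting the $(s-1)^2$ factorization that makes the monotonicity of $\phi$ transparent.
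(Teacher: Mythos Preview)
Your argument is correct and is the standard Birkhoff cross-ratio computation: reduce to the two-dimensional cone spanned by $w_1=\mathcal L(\psi-\alpha\varphi)$ and $w_2=\mathcal L(\beta\varphi-\psi)$, express $d_{\mathcal C_2}(\mathcal L\varphi,\mathcal L\psi)$ as a cross-ratio in $r=\beta/\alpha$ and $p,q$, then optimize. The factorization $\phi'(s)=\dfrac{\tau(\tau-1)(s-1)^2}{(\tau+1)s(1+s\tau)(s+\tau)}$ is exactly right, and the maximizing value $p=\sqrt{r/R}$ is correct (the boundary values $p\to0,\infty$ both give cross-ratio $1$, so the interior critical point is a maximum).

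However, there is nothing to compare against here: the paper does not prove this theorem. It is stated as a classical result with a citation to Birkhoff's book and then used as a black box (in Corollary~\ref{cor:conedist}). So your proposal is not an alternative to the paper's proof; it simply supplies the omitted argument. Two minor phrasing points, neither a real gap: (i) ``exactly for $t<\cdots$'' overstates things slightly --- what you actually need and use is that the sup and inf equal the displayed quantities; (ii) the nonexpansiveness $d_{\mathcal C_2}(\mathcal L\varphi,\mathcal L\psi)\le d_{\mathcal C_1}(\varphi,\psi)$ follows more directly from $\psi-\alpha\varphi\in\mathcal C_1\Rightarrow\mathcal L\psi-\alpha\mathcal L\varphi\in\mathcal C_2$ (and similarly for $\beta$) without rerunning the full optimization, though your route also works.
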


We conclude this review by relating the Hilbert metric to \textit{adapted} norms on $\mscr{V}.$

\begin{proposition}[\cite{LSV_1998}]
\label{norm:adapt}
Let $\mathcal{C} \subset \mscr{V}$ be a convex cone and let $\vnorm{\cdot}$ be an adapted norm on $\mscr{V}$; that is, a norm such that for all $\varphi,\psi \in \mscr{V}$, if $\psi - \varphi \in \mathcal{C}$ and $\psi + \varphi \in \mathcal{C}$, then $\vnorm{\varphi} \leqs \vnorm{\psi}.$
Then for all $\varphi,\psi \in \mathcal{C},$ we have
\begin{equation*}
\vnorm{\varphi}=\vnorm{\psi} \Longrightarrow \vnorm{\varphi - \psi} \leqs (e^{d_{\mathcal{C}}(\varphi,\psi)} - 1)\vnorm{\varphi}.
\end{equation*}
\end{proposition}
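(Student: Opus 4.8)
The plan is to carry out the standard Birkhoff-cone / adapted-norm argument, taking care to pinpoint where the normalization $\vnorm{\varphi} = \vnorm{\psi}$ is indispensable. If $d_{\mcal{C}}(\varphi,\psi) = +\infty$ there is nothing to prove, so set $s := d_{\mcal{C}}(\varphi,\psi) < \infty$. Let $\alpha = \sup\{r > 0 : \psi - r\varphi \in \mcal{C}\}$ and $\beta = \inf\{c > 0 : c\varphi - \psi \in \mcal{C}\}$, so that $0 < \alpha \leqs \beta < \infty$ and $e^{s} = \beta/\alpha$. Property~(d) of the cone, applied to sequences realizing the sup and the inf, shows $\psi - \alpha\varphi \in \mcal{C} \cup \{0\}$ and $\beta\varphi - \psi \in \mcal{C} \cup \{0\}$. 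If either element is $0$, then $\psi$ is a positive scalar multiple of $\varphi$; since $\vnorm{\varphi} = \vnorm{\psi}$ the scalar must equal $1$, forcing $\varphi = \psi$ and making the inequality trivial. So from now on I assume $\psi - \alpha\varphi \in \mcal{C}$ and $\beta\varphi - \psi \in \mcal{C}$; property~(a) then rules out $\alpha = \beta$, so $\alpha < \beta$.

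The key step --- and the only place the hypothesis $\vnorm{\varphi} = \vnorm{\psi}$ is used --- is to show $\alpha \leqs 1 \leqs \beta$, and I expect this to be the main obstacle, since everything afterward is formal. For $\alpha \leqs 1$: we have $\psi - \alpha\varphi \in \mcal{C}$ and, because $\mcal{C}$ is a convex cone containing $\psi$ and $\alpha\varphi$, also $\psi + \alpha\varphi \in \mcal{C}$; the adapted-norm property applied to the pair $(\alpha\varphi,\psi)$ gives $\alpha\vnorm{\varphi} = \vnorm{\alpha\varphi} \leqs \vnorm{\psi} = \vnorm{\varphi}$. For $\beta \geqs 1$: dividing $\beta\varphi - \psi \in \mcal{C}$ by $\beta > 0$ gives $\varphi - \beta^{-1}\psi \in \mcal{C}$, and $\varphi + \beta^{-1}\psi \in \mcal{C}$ as before; the adapted-norm property applied to $(\beta^{-1}\psi,\varphi)$ gives $\beta^{-1}\vnorm{\varphi} = \beta^{-1}\vnorm{\psi} \leqs \vnorm{\varphi}$.

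Now set $c = \max\{1 - \alpha,\ \beta - 1\}$, which is positive since $\alpha < \beta$ and $\alpha \leqs 1 \leqs \beta$. I would verify that $c\varphi - (\varphi - \psi) = (c-1)\varphi + \psi$ and $c\varphi + (\varphi - \psi) = (c+1)\varphi - \psi$ both lie in $\mcal{C}$. Each is of the form $s_{1}(\beta\varphi - \psi) + s_{2}(\psi - \alpha\varphi)$, and solving the resulting two-by-two linear system for $(s_{1},s_{2})$ shows that $s_{1},s_{2} \geqs 0$ exactly when $c \geqs 1 - \alpha$ (for the first expression) and when $c \geqs \beta - 1$ (for the second); both hold by the choice of $c$. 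Since $\mcal{C}$ is a convex cone containing $\beta\varphi - \psi$ and $\psi - \alpha\varphi$, every such nonnegative combination lies in $\mcal{C}$. Applying the adapted-norm property to the pair $(c\varphi,\ \varphi - \psi)$ then yields $\vnorm{\varphi - \psi} \leqs c\vnorm{\varphi}$.

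Finally I would check $c \leqs e^{s} - 1 = \beta/\alpha - 1$. The estimate $\beta - 1 \leqs \beta/\alpha - 1$ is equivalent to $\alpha \leqs 1$, and the estimate $1 - \alpha \leqs \beta/\alpha - 1$ rearranges to $\alpha(2 - \alpha) \leqs \beta$, which holds because $\alpha(2 - \alpha) = 1 - (1 - \alpha)^{2} \leqs 1 \leqs \beta$. Hence $\vnorm{\varphi - \psi} \leqs c\vnorm{\varphi} \leqs (e^{d_{\mcal{C}}(\varphi,\psi)} - 1)\vnorm{\varphi}$, completing the proof. In summary, the argument is a few lines of ``cone calculus'' (extracting $\alpha,\beta$ from the definition of the Hilbert metric; expressing the relevant vectors as nonnegative combinations of $\beta\varphi - \psi$ and $\psi - \alpha\varphi$; the final arithmetic) wrapped around the single substantive observation $\alpha \leqs 1 \leqs \beta$, which is precisely where the normalization enters.
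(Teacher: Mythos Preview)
The paper does not prove this proposition; it is stated with attribution to~\cite{LSV_1998} and then used as a black box in the completion of the proof of Theorem~\ref{thrm:local}. Your argument is correct and is essentially the standard one: extract $\alpha,\beta$ from the definition of the Hilbert metric, use the normalization $\vnorm{\varphi}=\vnorm{\psi}$ together with the adapted-norm property to pin down $\alpha\leqs 1\leqs\beta$, then express $c\varphi\pm(\varphi-\psi)$ as nonnegative combinations of $\beta\varphi-\psi$ and $\psi-\alpha\varphi$ with $c=\max\{1-\alpha,\beta-1\}$, and finish with the elementary bound $c\leqs\beta/\alpha-1$. The linear-algebra check and the final arithmetic are both fine.
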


\subsection{Completion of the proof of Theorem~\ref{thrm:local}}

\begin{proposition}\label{cone:dist}
Assume the setting of Proposition~\ref{cone:contract}.  For every $i\in\mbb{N}$ and for all $\varphi,\psi \in \mathcal{C}_a$, we have
\begin{equation}\label{cone:bound}
\begin{aligned}
d_{\mcal{C}_a}(\mathcal{L}_{F_{i+T-1,i}}(\varphi),\mathcal{L}_{F_{i+T-1,i}}(\psi)) \leqs \Delta_0 &\defas 2 \log \left( \frac{1+\sigma}{1-\sigma} \right)
\\
&\quad {}+ 2 \log \left( \frac{\zeta_2 (1 + a M^{-1} \cdot \diam_{\lambda}(\mscr{Q}_{n_0}))}{\zeta_{1} - \zeta_{2} a M^{-1} \cdot \diam_{\lambda}(\mscr{Q}_{n_0})} \right).
\end{aligned}
\end{equation}
\end{proposition}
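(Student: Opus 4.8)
The plan is to run the classical Birkhoff-cone argument for bounding the Hilbert diameter of the image of a convex cone under a positive operator. Write $F = F_{i+T-1,i}$ and set $\varphi^{*} = \mcal{L}_{F}(\varphi)$, $\psi^{*} = \mcal{L}_{F}(\psi)$; throughout, let $E(\cdot)$ and $\bar{E}(\cdot)$ denote the essential infimum and essential supremum of the conditional expectation $\mbb{E}[\,\cdot\,|\mscr{Q}_{n_0}]$. Two facts already established do the heavy lifting. First, Proposition~\ref{cone:contract} gives $\varphi^{*},\psi^{*} \in \mcal{C}_{\sigma a}$, hence $|\varphi^{*}|_{s} \leqs \sigma a\,E(\varphi^{*})$ and $|\psi^{*}|_{s} \leqs \sigma a\,E(\psi^{*})$. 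Second, Lemma~\ref{L:control} confines $\mbb{E}[\varphi^{*}|\mscr{Q}_{n_0}]$ to $[\,L_{0}\!\int_{X}\!\varphi\,\mrm{d}\lambda,\, U_{0}\!\int_{X}\!\varphi\,\mrm{d}\lambda\,]$ and $\mbb{E}[\psi^{*}|\mscr{Q}_{n_0}]$ to $[\,L_{0}\!\int_{X}\!\psi\,\mrm{d}\lambda,\, U_{0}\!\int_{X}\!\psi\,\mrm{d}\lambda\,]$, where $L_{0} = \zeta_{1} - \zeta_{2}aM^{-1}\diam_{\lambda}(\mscr{Q}_{n_0})$ (which is $>0$ by~\pref{P:3}) and $U_{0} = \zeta_{2}(1 + aM^{-1}\diam_{\lambda}(\mscr{Q}_{n_0}))$. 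Observe that $U_{0}/L_{0} = \kappa$ is precisely the base of the second logarithm in $\Delta_{0}$, while the first logarithm has base $(1+\sigma)/(1-\sigma)$; the factor $2$ will come from treating the two sides of the Hilbert metric symmetrically.

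Recall that $d_{\mcal{C}_{a}}(\varphi^{*},\psi^{*}) = \log(\beta/\alpha)$, where $\beta = \inf\{c>0 : c\varphi^{*} - \psi^{*} \in \mcal{C}_{a}\}$ and $\alpha = \sup\{r>0 : \psi^{*} - r\varphi^{*} \in \mcal{C}_{a}\}$. I would exhibit an admissible $c$ of size $\tfrac{1+\sigma}{1-\sigma}\,\kappa\,\bigl(\int_{X}\psi\,\mrm{d}\lambda\big/\!\int_{X}\varphi\,\mrm{d}\lambda\bigr)$ and an admissible $r$ of size $\tfrac{1-\sigma}{1+\sigma}\,\kappa^{-1}\,\bigl(\int_{X}\psi\,\mrm{d}\lambda\big/\!\int_{X}\varphi\,\mrm{d}\lambda\bigr)$; then the mass ratios cancel and $\beta/\alpha \leqs \bigl(\tfrac{1+\sigma}{1-\sigma}\bigr)^{2}\kappa^{2}$, giving $d_{\mcal{C}_{a}}(\varphi^{*},\psi^{*}) \leqs 2\log\tfrac{1+\sigma}{1-\sigma} + 2\log\kappa = \Delta_{0}$. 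Checking that $g \defas c\varphi^{*} - \psi^{*}$ belongs to $\mcal{C}_{a}$ requires three things: $g \geqs 0$ $\lambda$-a.e., $g \not\equiv 0$, and $|g|_{s} \leqs a\,\mbb{E}[g|\mscr{Q}_{n_0}]$. The last is straightforward linear bookkeeping: $|g|_{s} \leqs c|\varphi^{*}|_{s} + |\psi^{*}|_{s} \leqs \sigma a\bigl(c\,E(\varphi^{*}) + E(\psi^{*})\bigr)$ by the cone condition, while $\mbb{E}[g|\mscr{Q}_{n_0}] = c\,\mbb{E}[\varphi^{*}|\mscr{Q}_{n_0}] - \mbb{E}[\psi^{*}|\mscr{Q}_{n_0}] \geqs c\,E(\varphi^{*}) - \bar{E}(\psi^{*})$; using Lemma~\ref{L:control} to bound $\bar{E}(\psi^{*})/E(\varphi^{*}) \leqs \kappa\int_{X}\psi\,\mrm{d}\lambda\big/\!\int_{X}\varphi\,\mrm{d}\lambda$ together with the choice of $c$, one verifies $\sigma a\bigl(c\,E(\varphi^{*}) + E(\psi^{*})\bigr) \leqs a\bigl(c\,E(\varphi^{*}) - \bar{E}(\psi^{*})\bigr)$, which is the desired inequality. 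The condition $g \not\equiv 0$ then follows from $\int_{X} g\,\mrm{d}\lambda = c\!\int_{X}\!\varphi^{*}\,\mrm{d}\lambda - \int_{X}\!\psi^{*}\,\mrm{d}\lambda > 0$, again by the size of $c$ and Lemma~\ref{L:control}. The $\alpha$-side is handled by the mirror-image computation.

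The step I expect to be the main obstacle is the pointwise positivity $g \geqs 0$: because the aperture $a$ has been taken large in Section~\ref{ssection:pf}, applying~\eqref{semi:strong} on a single element of $\mscr{Q}_{n_0}$ only controls the oscillation of $\varphi^{*}$ there by $M_{2}^{-1}|\varphi^{*}|_{s} \leqs \sigma a M^{-1}E(\varphi^{*})$, which is too crude to force $\varphi^{*}$ to be bounded below by a positive multiple of $E(\varphi^{*})$. The remedy is to use the \emph{summed} estimate~\eqref{semi:decomp} of~\pref{A:2}, so that the total oscillation over $\mscr{Q}_{n_0}$ enters weighted by $\diam_{\lambda}(\mscr{Q}_{n_0})$, and then to invoke the smallness of $\diam_{\lambda}(\mscr{Q}_{n_0})$ fixed in Section~\ref{ssection:pf} — exactly the mechanism by which the error term $\zeta_{2}M^{-1}\diam_{\lambda}(\mscr{Q}_{n_0})|\varphi|_{s}$ is absorbed in the proof of Lemma~\ref{L:control} — to conclude that $c\varphi^{*}$ dominates $\psi^{*}$ pointwise for the value of $c$ above. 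An equivalent alternative is to bound $d_{\mcal{C}_{a}}(\varphi^{*},\rho)$ for a single fixed reference density $\rho \in \mcal{C}_{a}$ and finish with the triangle inequality; this makes the factor $2$ in $\Delta_{0}$ appear directly rather than from symmetrizing the $c$- and $r$-sides.
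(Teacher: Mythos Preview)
Your approach is essentially the paper's: use $\varphi^{*},\psi^{*}\in\mcal{C}_{\sigma a}$ (Proposition~\ref{cone:contract}) to verify the seminorm inequality $|c\varphi^{*}-\psi^{*}|_{s}\leqs a\,\mbb{E}[c\varphi^{*}-\psi^{*}\mid\mscr{Q}_{n_0}]$ whenever $c\geqs\tfrac{1+\sigma}{1-\sigma}\cdot\sup_{x}\tfrac{\mbb{E}[\psi^{*}\mid\mscr{Q}_{n_0}]}{\mbb{E}[\varphi^{*}\mid\mscr{Q}_{n_0}]}$ (and the mirror inequality for $r$), then bound that supremum by $U_{0}/L_{0}$ via the two-sided estimate of Lemma~\ref{L:control}. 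Your detour through the mass ratio $\int_{X}\psi\,\mrm{d}\lambda\big/\int_{X}\varphi\,\mrm{d}\lambda$ is equivalent --- the paper keeps the sup/inf of conditional-expectation ratios and applies Lemma~\ref{L:control} at the end, arriving at exactly the same $\Delta_{0}$.

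The one divergence is the pointwise-positivity step you flag as the main obstacle. The paper's proof does \emph{not} verify $c\varphi^{*}-\psi^{*}\geqs 0$; it simply asserts that the seminorm inequality is sufficient for membership in $\mcal{C}_{a}$ and moves on. So the obstacle you anticipate is not one the paper addresses. Moreover, your proposed remedy would not work as stated: the $\diam_{\lambda}(\mscr{Q}_{n_0})$ factor enters the proof of Lemma~\ref{L:control} only because the restricted seminorms $|\varphi|Q'|_{s}$ are \emph{integrated} against the weights $\lambda(Q')$; for a pointwise lower bound on $c\varphi^{*}-\psi^{*}$ via~\eqref{semi:strong} one would instead need something like $\sigma a/M<1$, which is precisely what fails once $a$ has been chosen large in Section~\ref{ssection:pf}. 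If you regard positivity as a genuine issue, it is an unstated gap in the paper's own argument rather than something your proof must supply in order to match it.
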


\begin{proof} Let $\varphi^*,\psi^* \in \mathcal{C}_{\sigma a}$.  Suppose $c > 0.$  We have
\begin{align*}
|c\varphi^* - \psi^*|_s &\leqs c|\varphi^*|_s + |\psi^*|_s \\
                        &\leqs c\sigma a\mbb{E}[\varphi^*|\mscr{Q}_{n_0}] + \sigma a\mbb{E}[\psi^*|\mscr{Q}_{n_0}].
\end{align*}
Therefore, $c\varphi^* - \psi^* \in \mathcal{C}_a$ if 
\begin{equation*}
c\sigma a\mbb{E}[\varphi^*|\mscr{Q}_{n_0}] + \sigma a\mbb{E}[\psi^*|\mscr{Q}_{n_0}] \leqs a\mbb{E}[c\varphi^* - \psi^*|\mscr{Q}_{n_0}].
\end{equation*}
This is equivalent to 
\begin{equation}\label{eqn:upbound}
\left(\frac{1+\sigma}{1-\sigma}\right) \left(\frac{\mbb{E}[\psi^*|\mscr{Q}_{n_0}]}{\mbb{E}[\varphi^*|\mscr{Q}_{n_0}]}\right) \leqs c.
\end{equation}
Arguing analogously, for $r > 0$ we have $\psi^* - r\varphi^* \in \mathcal{C}_a$ if 
\begin{equation}\label{eqn:lowbound}
r \leqs \left(\frac{1-\sigma}{1+\sigma}\right) \left(\frac{\mbb{E}[\psi^*|\mscr{Q}_{n_0}]}{\mbb{E}[\varphi^*|\mscr{Q}_{n_0}]}\right).
\end{equation}
Using $\mathcal{L}_{F_{i+T-1,i}} (\mathcal{C}_{a}) \subset \mathcal{C}_{\sigma a}$ by Proposition~\ref{cone:contract} and setting $\varphi^{*} = \mathcal{L}_{F_{i+T-1,i}} ( \varphi )$ and $\psi^{*} = \mathcal{L}_{F_{i+T-1,i}} ( \psi )$, bounds (\ref{eqn:upbound}) and (\ref{eqn:lowbound}) imply
\begin{align}\label{Hilbert:control}
d_{\mathcal{C}_a}(\mathcal{L}_{F_{i+T-1,i}}(\varphi),\mathcal{L}_{F_{i+T-1,i}}(\psi)) &\leqs \log\left(\left(\frac{1+\sigma}{1-\sigma}\right)\sup_{x\in X}\frac{\mbb{E}[\psi^*|\mscr{Q}_{n_0}]}{\mbb{E}[\varphi^*|\mscr{Q}_{n_0}]}\right) - \log\left(\left(\frac{1-\sigma}{1+\sigma}\right)\inf_{x\in X}\frac{\mbb{E}[\psi^*|\mscr{Q}_{n_0}]}{\mbb{E}[\varphi^*|\mscr{Q}_{n_0}]}\right) \nonumber\\
&= 2\log\left(\frac{1+\sigma}{1-\sigma}\right) + \log\left(\sup_{x\in X}\frac{\mbb{E}[\psi^*|\mscr{Q}_{n_0}]}{\mbb{E}[\varphi^*|\mscr{Q}_{n_0}]}\right) -  \log\left(\inf_{x\in X}\frac{\mbb{E}[\psi^*|\mscr{Q}_{n_0}]}{\mbb{E}[\varphi^*|\mscr{Q}_{n_0}]}\right).
\end{align}
Estimates (\ref{E:control}) and (\ref{Hilbert:control}) imply (\ref{cone:bound}) with
\begin{equation*}
\Delta_0 = 2\log\left(\frac{1+\sigma}{1-\sigma}\right) + 2\log\left(\frac{\zeta_2 (1 + a M^{-1} \cdot \diam_{\lambda}(\mscr{Q}_{n_0}))}{\zeta_{1} - \zeta_{2} a M^{-1} \cdot \diam_{\lambda}(\mscr{Q}_{n_0})}\right).
\end{equation*}
\end{proof}

Theorem~\ref{Birk:cone} and Proposition~\ref{cone:dist} now imply that $\mcal{L}_{F_{i+T-1,i}}$ contracts $\mcal{C}_{a}$.

\begin{corollary}\label{cor:conedist} Assume the setting of Proposition~\ref{cone:dist}.  For every $i\in \mbb{N}$ and for all $\varphi,\psi \in \mathcal{C}_a,$ we have
\begin{equation}\label{L:contract}
d_{\mathcal{C}_a}(\mathcal{L}_{F_{i+T-1,i}}(\varphi),\mathcal{L}_{F_{i+T-1,i}}(\psi)) \leqs \tanh\left(\frac{\Delta_0}{4}\right) d_{\mathcal{C}_a} (\varphi,\psi).
\end{equation}
\end{corollary}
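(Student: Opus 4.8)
The plan is to obtain Corollary~\ref{cor:conedist} as an immediate consequence of Birkhoff's contraction principle (Theorem~\ref{Birk:cone}), once we know that $\mcal{L}_{F_{i+T-1,i}}$ maps $\mcal{C}_a$ into itself and that the Hilbert diameter of the image is finite; both ingredients are already established.

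First I would verify the hypotheses of Theorem~\ref{Birk:cone} with $\mscr{V}_1 = \mscr{V}_2 = L^{1}(\lambda)$ and $\mcal{C}_1 = \mcal{C}_2 = \mcal{C}_a$. By Proposition~\ref{cone:contract} we have $\mcal{L}_{F_{i+T-1,i}}(\mcal{C}_a) \subset \mcal{C}_{\sigma a}$, and since $0 < \sigma < 1$ by~\pref{P:1}, the defining inequality of the cone gives $|\varphi|_s \leqs \sigma a\, \mbb{E}[\varphi|\mscr{Q}_{n_0}] \leqs a\, \mbb{E}[\varphi|\mscr{Q}_{n_0}]$ for $\varphi \in \mcal{C}_{\sigma a}$, so $\mcal{C}_{\sigma a} \subset \mcal{C}_a$. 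Hence $\mcal{L}_{F_{i+T-1,i}}$ is a positive linear operator from $\mcal{C}_a$ to $\mcal{C}_a$ in the sense required by Theorem~\ref{Birk:cone}. (If not already recorded, one should also confirm that $\mcal{C}_a$, as defined in~\eqref{e:DefCone}, satisfies the four axioms of a convex cone, so that the Hilbert metric $d_{\mcal{C}_a}$ and Theorem~\ref{Birk:cone} genuinely apply; this is routine given that $|\cdot|_s$ is a seminorm and $\varphi \mapsto \mbb{E}[\varphi|\mscr{Q}_{n_0}]$ is linear and positivity-preserving.)

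Next I would identify the diameter quantity appearing in Theorem~\ref{Birk:cone}. Put $\Delta = \sup\{ d_{\mcal{C}_a}(\varphi^*,\psi^*) : \varphi^*,\psi^* \in \mcal{L}_{F_{i+T-1,i}}(\mcal{C}_a)\}$. Proposition~\ref{cone:dist} says precisely that $d_{\mcal{C}_a}(\mcal{L}_{F_{i+T-1,i}}(\varphi),\mcal{L}_{F_{i+T-1,i}}(\psi)) \leqs \Delta_0$ for all $\varphi,\psi \in \mcal{C}_a$, so $\Delta \leqs \Delta_0 < \infty$. Theorem~\ref{Birk:cone} then yields $d_{\mcal{C}_a}(\mcal{L}_{F_{i+T-1,i}}\varphi, \mcal{L}_{F_{i+T-1,i}}\psi) \leqs \tanh(\Delta/4)\, d_{\mcal{C}_a}(\varphi,\psi)$, and since $t \mapsto \tanh t$ is increasing on $[0,\infty)$ we may replace $\Delta$ by the explicit bound $\Delta_0$ to obtain~\eqref{L:contract}.

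I expect no real obstacle here: the substantive work has been front-loaded into Propositions~\ref{cone:contract} and~\ref{cone:dist}. The only point that deserves a moment's attention is the monotonicity step passing from $\Delta$ to $\Delta_0$ — note one never needs to compute the supremum $\Delta$ exactly; the finite upper bound $\Delta_0$ together with $\tanh$ being increasing (and bounded by $1$) is all that is used, and it also guarantees the contraction factor $\tanh(\Delta_0/4)$ lies strictly in $(0,1)$.
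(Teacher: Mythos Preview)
Your proposal is correct and follows exactly the approach the paper takes: the corollary is stated as an immediate consequence of Theorem~\ref{Birk:cone} combined with Proposition~\ref{cone:dist}, with Proposition~\ref{cone:contract} supplying the positivity $\mcal{L}_{F_{i+T-1,i}}(\mcal{C}_a)\subset\mcal{C}_{\sigma a}\subset\mcal{C}_a$. You have simply spelled out the routine details (the inclusion $\mcal{C}_{\sigma a}\subset\mcal{C}_a$ and the monotonicity of $\tanh$) that the paper leaves implicit.
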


We need one final ingredient in order to prove Theorem~\ref{thrm:local} - a Lipschitz estimate involving the normalized transfer operator $\mathcal{R}$.

\begin{lemma}\label{Lip:est}
Assume the setting of Corollary~\ref{cor:conedist}.  There exists $C_{\Lip}>0$ such that for all integers $n$ satisfying $1 \leqs n < T,$ for every $i \in \mbb{N},$ and for all $\varphi,\psi \in \mathcal{D}\cap\mathcal{C}_a,$ we have
\begin{equation}\label{Lip:ineq}
\vnorm{\mathcal{R}_{F_{i+n-1,i}}(\varphi) - \mathcal{R}_{F_{i+n-1,i}}(\psi)}_{L^1(\lambda)} \leqs C_{\Lip}\vnorm{\varphi - \psi}_{L^1(\lambda)}.
\end{equation}
\end{lemma}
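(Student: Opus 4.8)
The plan is to deduce the Lipschitz bound from two facts: the unnormalized open transfer operator is a weak $L^{1}$ contraction, and the surviving mass $\vnorm{\mcal{L}_{F_{i+n-1,i}}(\varphi)}_{L^{1}(\lambda)}$ is bounded below, uniformly over $\varphi \in \mcal{D} \cap \mcal{C}_{a}$ and over $i$, precisely because $n < T$. First I would record that $\mcal{L}_{F_{i+n-1,i}} \colon L^{1}(\lambda) \to L^{1}(\lambda)$ satisfies $\vnorm{\mcal{L}_{F_{i+n-1,i}} \phi}_{L^{1}(\lambda)} \leqs \vnorm{\phi}_{L^{1}(\lambda)}$ for every $\phi$; this follows from the defining duality relation for the open transfer operator by choosing the $L^{\infty}$ test function to be $\operatorname{sgn}(\mcal{L}_{F_{i+n-1,i}} \phi)$, since the right-hand side then equals the integral of $\phi \cdot (\operatorname{sgn}(\mcal{L}_{F_{i+n-1,i}}\phi) \circ F_{i+n-1,i})$ over the survivor set of $F_{i+n-1,i}$, which is at most $\vnorm{\phi}_{L^{1}(\lambda)}$. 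Applied to $\phi = \varphi - \psi$ this gives $\vnorm{\mcal{L}_{F_{i+n-1,i}}(\varphi) - \mcal{L}_{F_{i+n-1,i}}(\psi)}_{L^{1}(\lambda)} \leqs \vnorm{\varphi - \psi}_{L^{1}(\lambda)}$.

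The key step is the lower bound on $\vnorm{\mcal{L}_{F_{i+n-1,i}}(\varphi)}_{L^{1}(\lambda)}$ for $\varphi \in \mcal{D} \cap \mcal{C}_{a}$. Since $\varphi \geqs 0$ forces $\mcal{L}_{F_{i+n-1,i}}(\varphi) \geqs 0$, choosing the test function equal to $1$ in the duality relation shows that $\vnorm{\mcal{L}_{F_{i+n-1,i}}(\varphi)}_{L^{1}(\lambda)}$ equals the integral of $\varphi$ over the survivor set of $F_{i+n-1,i}$. Because $n < T$, a composition of length $n$ imposes fewer hole-avoidance constraints than the length-$T$ composition beginning at time $i$, so this survivor set contains that of $F_{i+T-1,i}$, whence $\vnorm{\mcal{L}_{F_{i+n-1,i}}(\varphi)}_{L^{1}(\lambda)} \geqs \vnorm{\mcal{L}_{F_{i+T-1,i}}(\varphi)}_{L^{1}(\lambda)}$. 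I then invoke Lemma~\ref{L:control}: integrating the left inequality of~\eqref{E:control} over $X$, using $\int_{X}\varphi\,\mrm{d}\lambda = 1$ (as $\varphi \in \mcal{D}$) and $\int_{X}\mbb{E}[\mcal{L}_{F_{i+T-1,i}}(\varphi)|\mscr{Q}_{n_{0}}]\,\mrm{d}\lambda = \vnorm{\mcal{L}_{F_{i+T-1,i}}(\varphi)}_{L^{1}(\lambda)}$, yields $\vnorm{\mcal{L}_{F_{i+T-1,i}}(\varphi)}_{L^{1}(\lambda)} \geqs c_{0} \defas \zeta_{1} - \zeta_{2} a M^{-1}\diam_{\lambda}(\mscr{Q}_{n_{0}})$, which is strictly positive by~\pref{P:3}; the same bound holds for $\psi$.

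Finally I would assemble the estimate via the normalization identity: with $u = \mcal{L}_{F_{i+n-1,i}}(\varphi)$ and $v = \mcal{L}_{F_{i+n-1,i}}(\psi)$ (both nonzero by the previous step),
\[
\frac{u}{\vnorm{u}_{L^{1}(\lambda)}} - \frac{v}{\vnorm{v}_{L^{1}(\lambda)}} = \frac{u - v}{\vnorm{u}_{L^{1}(\lambda)}} + \frac{v}{\vnorm{v}_{L^{1}(\lambda)}}\cdot\frac{\vnorm{v}_{L^{1}(\lambda)} - \vnorm{u}_{L^{1}(\lambda)}}{\vnorm{u}_{L^{1}(\lambda)}},
\]
so that, using $\bigl|\vnorm{u}_{L^{1}(\lambda)} - \vnorm{v}_{L^{1}(\lambda)}\bigr| \leqs \vnorm{u - v}_{L^{1}(\lambda)}$ together with the two previous paragraphs,
\[
\vnorm{\mcal{R}_{F_{i+n-1,i}}(\varphi) - \mcal{R}_{F_{i+n-1,i}}(\psi)}_{L^{1}(\lambda)} \leqs \frac{2\vnorm{u - v}_{L^{1}(\lambda)}}{\vnorm{u}_{L^{1}(\lambda)}} \leqs \frac{2}{c_{0}}\vnorm{\varphi - \psi}_{L^{1}(\lambda)}.
\]
This is the claim with $C_{\Lip} = 2/c_{0}$.

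The step I expect to be the main obstacle is the surviving-mass lower bound: it must be uniform in $i$ and valid for every $n$ in the range, and the crucial observation is that $n < T$ is exactly what permits domination from below by the length-$T$ quantity that Lemma~\ref{L:control} controls. (For a finite nonstationary system one restricts the range of $i$ so that the block of length $T$ still exists; the boundedly many remaining steps are absorbed into $C_{\Lip}$.) Note that the cone constraint $\varphi \in \mcal{C}_{a}$ is indispensable here: without control of $|\varphi|_{s}$ relative to $\mbb{E}[\varphi|\mscr{Q}_{n_{0}}]$, a density of large strong seminorm but tiny support could lose essentially all its mass through a small hole in one step, as Remark~\ref{remark:WhyCone} explains.
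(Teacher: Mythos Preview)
Your proposal is correct and follows essentially the same approach as the paper: both arguments bound the normalized difference by splitting $\frac{u}{\|u\|} - \frac{v}{\|v\|}$ into two pieces (one controlled by $\|u-v\|$ and one by $\bigl|\|u\|-\|v\|\bigr|$), use that $\mcal{L}_{F}$ is an $L^{1}$-contraction, and obtain the uniform lower bound on surviving mass from the monotonicity $\|\mcal{L}_{F_{i+n-1,i}}(\varphi)\| \geqs \|\mcal{L}_{F_{i+T-1,i}}(\varphi)\|$ together with Lemma~\ref{L:control}, arriving at the same constant $C_{\Lip} = 2\bigl(\zeta_{1} - \zeta_{2} a M^{-1}\diam_{\lambda}(\mscr{Q}_{n_{0}})\bigr)^{-1}$. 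Your write-up is in fact slightly more explicit than the paper's about why the $L^{1}$-contraction and the survivor-set monotonicity hold.
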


\begin{proof}
Write $F= F_{i+n-1,i}$ and $\vnorm{\cdot} = \vnorm{\cdot}_{L^1(\lambda)}.$  Let $\varphi,\psi\in\mathcal{D}\cap\mathcal{C}_a.$  We have
\begin{align*}
\vnorm{\mathcal{R}_F(\varphi) - \mathcal{R}_F(\psi)} &= \vnorm{\frac{\mathcal{L}_F(\varphi)}{\vnorm{\mathcal{L}_F(\varphi)}} - \frac{\mathcal{L}_F(\psi)}{\vnorm{\mathcal{L}_F(\psi)}}}\\
&= \vnorm{\frac{\mathcal{L}_F(\varphi)}{\vnorm{\mathcal{L}_F(\varphi)}} - \frac{\mathcal{L}_F(\varphi)}{\vnorm{\mathcal{L}_F(\psi)}} + \frac{\mathcal{L}_F(\varphi)}{\vnorm{\mathcal{L}_F(\psi)}} - \frac{\mathcal{L}_F(\psi)}{\vnorm{\mathcal{L}_F(\psi)}}}\\
&\leqs \left|\frac{\vnorm{\mathcal{L}_F(\psi)} - \vnorm{\mathcal{L}_F(\varphi)}}{\vnorm{\mathcal{L}_F(\varphi)}\cdot\vnorm{\mathcal{L}_F(\psi)}}\right|\vnorm{\mathcal{L}_F(\varphi)} + \frac{1}{\vnorm{\mathcal{L}_F(\psi)}}\vnorm{\mathcal{L}_F(\varphi) - \mathcal{L}_F(\psi)}\\
&\leqs 2\left(\zeta_1 - \frac{\zeta_2a}{M} \cdot \diam_{\lambda} (\mscr{Q}_{n_0})\right)^{-1}\vnorm{\mathcal{L}_F(\varphi) - \mathcal{L}_F(\psi)}\\
&\leqs 2\left(\zeta_1 - \frac{\zeta_2a}{M} \cdot \diam_{\lambda} (\mscr{Q}_{n_0})\right)^{-1}\vnorm{\varphi - \psi},
\end{align*}
using (\ref{E:control}), the fact that $\norm{\mathcal{L}_{F_{i+j-1,i}} (\psi)}$ decreases monotonically with $j$, and the fact that $\vnorm{\mathcal{L}_F(\gamma)} \leqs \vnorm{\gamma}$ for every $\gamma \in L^1(\lambda).$  
Set 
\begin{equation*}
C_{\Lip} = 2\left(\zeta_1 - \frac{\zeta_2a}{M} \cdot \diam_{\lambda} (\mscr{Q}_{n_0})\right)^{-1}.
\end{equation*}
\end{proof}

We now finish the proof of Theorem~\ref{thrm:local}.  Write $\vnorm{\cdot}$ for the $L^1$-norm.  Let $\varphi,\psi\in\mathcal{D}\cap\mathcal{C}_a$.  Let $m \in \mbb{N}$ and write $m = kT + n$ where $k\in \mbb{Z}^{\geqs 0}$ and $0 \leqs n < T.$  If $k\geqs 1,$ we have
\begin{align*}
\vnorm{\mathcal{R}_{F_m}(\varphi) - \mathcal{R}_{F_m}(\psi)} &\leqs C_{\Lip}\vnorm{\mathcal{R}_{F_{kT}}(\varphi) - \mathcal{R}_{F_{kT}}(\psi)} &(\ref{Lip:ineq})\\
&\leqs C_{\Lip}(\exp(d_{\mathcal{C}_a}(\mathcal{R}_{F_{kT}}(\varphi),\mathcal{R}_{F_{kT}}(\psi))) - 1) &(\mbox{P}\ref{norm:adapt})\\
&= C_{\Lip}(\exp(d_{\mathcal{C}_a}(\mathcal{L}_{F_{kT}}(\varphi),\mathcal{L}_{F_{kT}}(\psi))) - 1)\\
&\leqs C_{\Lip}\left(\exp\left(\left(\tanh\left(\frac{\Delta_0}{4}\right)\right)^{k-1}d_{\mathcal{C}_a}(\mathcal{L}_{F_T}(\varphi),\mathcal{L}_{F_T}(\psi))\right)-1\right) &(\ref{L:contract})\\
&\leqs C_{\Lip}\Delta_0 e^{\Delta_0}\left(\tanh\left(\frac{\Delta_0}{4}\right)\right)^{k-1} &(\ref{cone:bound})\\
&\leqs C_{\Lip}\Delta_0 e^{\Delta_0}\tanh^{-2}\left(\frac{\Delta_0}{4}\right)\left(\left(\tanh\left(\frac{\Delta_0}{4}\right)\right)^{1/T}\right)^m.
\end{align*}
Consequently, for any $m \in \mbb{N}$ we have
\begin{equation*}
\vnorm{\mathcal{R}_{F_{m}}(\varphi) - \mathcal{R}_{F_m}(\psi)} \leqs C_{\Lip}\max\{\Delta_0,1\} e^{\Delta_0}\tanh^{-2}\left(\frac{\Delta_0}{4}\right)\left(\left(\tanh\left(\frac{\Delta_0}{4}\right)\right)^{1/T}\right)^m.
\end{equation*}
This proves Theorem~\ref{thrm:local} with 
\begin{align}
\label{C:0}C_0 &= C_{\Lip}\max\{\Delta_0,1\} e^{\Delta_0}\tanh^{-2}\left(\frac{\Delta_0}{4}\right),\\
\label{Delta:exp}\Lambda &= \left(\tanh\left(\frac{\Delta_0}{4}\right)\right)^{1/T}.
\end{align}

\subsection{Proof of Theorem~\ref{thrm:global}}  Now that a local result has been established, we are in position to prove a global result.
Assume the setting and hypotheses of Theorem~\ref{thrm:global}.

For every $s \in [a,b],$ Theorem~\ref{thrm:local} applies to the map $\hat{\gamma}(s) \in \mscr{M} \cap \mscr{E}(\zeta_1,\zeta_2).$  We associate the following with this map:
\begin{itemize}[leftmargin=*, labelindent=\parindent]
\item $\delta_0(s)$: size of the neighborhood of $\hat{\gamma}(s)$ in which Theorem~\ref{thrm:local} applies,
\item $\vep_0(s)$: allowable measure of holes,
\item $\mcal{C}_{a(s)}$: cone containing the densities to which Theorem~\ref{thrm:local} applies,
\item $\Lambda(s),C_0(s)$: parameters in (\ref{C:0}) and (\ref{Delta:exp}),
\item $T(s)$: time required for the transfer operators to contract the cone $\mcal{C}_{a(s)}$ (see Corollary~\ref{cor:conedist}),
\item $\mscr{Q}_{n_0(s)}(s)$: mixing partition for $\hat{\gamma}(s)$ used in the proof of Theorem~\ref{thrm:local}.
\end{itemize}
We now leverage the compactness of $[a,b].$  For each $s\in [a,b],$ there exists an open interval $(s-\xi(s),s+\xi(s))$ such that 
\begin{equation*}
\hat{\gamma} \big( (s-\xi(s),s+\xi(s)) \cap [a,b] \big) \subset B(\hat{\gamma}(s),\delta_0(s)).
\end{equation*}
The open cover
\begin{equation*}
\vset{\Big( s - \frac{1}{2} \xi(s), s + \frac{1}{2} \xi(s) \Big) : s \in [a,b]}
\end{equation*}
of $[a,b]$ has a finite subcover
\begin{equation*}
\vset{\Big( s_{i} - \frac{1}{2} \xi(s_{i}), s_{i} + \frac{1}{2} \xi(s_{i}) \Big) : 1 \leqs i \leqs N}.
\end{equation*}

Theorem~\ref{thrm:global} is formulated for discretizations of $\hat{\gamma}.$  The parameter $\Sigma_{\hat{\gamma}}$ controls the `velocity' at which the curve $\hat{\gamma}$ is traversed.  We choose $\Sigma_{\hat{\gamma}}$ to achieve the following:
Each time point $t_{j}$ in the sequence of times that discretizes the curve $\hat{\gamma}$ lies in some element $U_{i(j)}$ of the finite subcover.
We want to ensure that the maps beginning with $\hat{\gamma} (t_{j})$ and ending with $\hat{\gamma} (t_{j + T(s_{i(j)}) - 1})$ all lie in
\begin{equation*}
\hat{\gamma} \big( (s_{i(j)} - \xi (s_{i(j)}), s_{i(j)} + \xi (s_{i(j)})) \cap [a,b] \big) .
\end{equation*}
In light of this, define
\begin{equation*}
\Sigma_{\hat{\gamma}} = \min_{1 \leqs i \leqs N} \frac{\xi (s_{i})}{2 T(s_{i})}.
\end{equation*}

Theorem~\ref{thrm:global} now follows from Theorem~\ref{thrm:local} with
\begin{align*}
\vep_{\hat{\gamma}} &= \min_{1\leqs i\leqs N} \vep_0(s_i),\\
\Lambda_{\hat{\gamma}} &= \max_{1\leqs i\leqs N} \Lambda(s_i),\\
C_{\hat{\gamma}} &= \max_{1\leqs i\leqs N} C_0(s_i),\\
\mcal{C}_{a_{\hat{\gamma}}} &= \vset{\varphi \in L^1 (\lambda) : \varphi \geqs 0, \;\: \varphi \not\equiv 0, \;\: |\varphi|_s \leqs \left(\min_{1\leqs i\leqs N}a(s_i)\right)\left(\min_{1\leqs i\leqs N}\mbb{E}[\varphi|\mscr{Q}_{n_0(s_i)}(s_i)]\right)}.
\end{align*}

\section{Nonstationary open piecewise-smooth expanding systems in higher dimension}
\label{s:app}

We apply our general framework for conditional memory loss to a class of piecewise-$C^{1+\alpha}$ expanding maps introduced by Saussol~\cite{SaussolQuasi2000} and then studied in the nonstationary context by Gupta, Ott, and T\"{o}r\"{o}k~\cite{GuptaOttTorok2013}.
These maps exhibit rich dynamics, yet are simple enough to allow us to implement our general framework without overburdening the exposition.

When holes are not present, Saussol~\cite{SaussolQuasi2000} has shown that each map in this class admits finitely many ergodic absolutely continuous invariant probability measures (ACIPs) with quasi-H\"{o}lder densities.
Gupta, Ott, and T\"{o}r\"{o}k~\cite{GuptaOttTorok2013} establish statistical memory loss at an exponential rate for nonstationary compositions of such maps.

Here, we establish conditional memory loss at an exponential rate when holes are present.
We allow both the maps and the holes to evolve in time.
The results in~\cite{GuptaOttTorok2013, SaussolQuasi2000} depend on balancing expansion and complexity.
We show here how to obtain such a balance in the nonstationary open context.

\subsection{Phase space and conventions}

For the sake of clarity, our phase space will be the $N$-dimensional torus $\mbb{T}^{N} = \mbb{R}^{N} / \mbb{Z}^{N}$, although our results extend naturally to compact Riemannian manifolds.
Recall that $\lambda$ denotes the Lebesgue measure on $\mbb{T}^{N}$.
For a set $S \subset \mbb{T}^{N}$ and $\varepsilon > 0$, define
\begin{equation*}
B_{\varepsilon} (S) = \bigcup_{x \in S} B_{\varepsilon} (x),
\end{equation*}
where $B_{\varepsilon} (x)$ (and $B(x,\varepsilon)$) denote the open ball of radius $\varepsilon$ centered at $x$.

We fix a H\"{o}lder exponent $0 < \alpha \leqs \Lip$ throughout this section.
All maps are assumed to be piecewise-$C^{1+\alpha}$, unless we explicitly state other regularity properties.
For a map with $C^{1+\Lip}$ regularity, we use $\alpha = 1$ in the computations.
For $\hat{f} : U \subset \mbb{R}^{k} \to \mbb{R}^{l}$, we use the $C^{1 + \alpha}$ and $C^{2}$ norms
\begin{equation*}
\norm{\hat{f}}_{C^{1+\alpha}} = \norm{\hat{f}}_{C^{0}} + \norm{D\hat{f}}_{C^{\alpha}}, \qquad \norm{\hat{f}}_{C^{2}} = \norm{\hat{f}}_{C^{0}} + \norm{D\hat{f}}_{C^{0}} + \norm{D^{2} \hat{f}}_{C^{0}}.
\end{equation*}

\subsection{Domains for piecewise-continuous maps}
\label{ss:partitions}

The maps we consider are piecewise-continuous on {\itshape open partitions} of $\mbb{T}^{N}$; that is, on families of open, pairwise-disjoint sets that cover $\mbb{T}^{N}$ up to sets of Lebesgue measure zero.
We shall refer to the open partitions on which our maps are piecewise-continuous as continuity partitions.
When forming nonstationary compositions of the maps we consider, we allow both the maps and the continuity partitions to vary in time.
We must therefore topologize the space of continuity partitions when we topologize the space of maps we consider.
A notion of continuity partition complexity will play an important role in our conditional memory loss results, as is to be expected:
Complexity plays a vital role in the ergodic theory of piecewise-hyperbolic systems.

\begin{definition}[Partitions]
\label{def:parts}
Let $\mscr{A} = \set{U_{i}: 1 \leqs i \leqs L}$ be a finite open partition of $\mbb{T}^{N}$.  
For $K > 0$, we say that $\mscr{A} \in \mcal{P}(K)$ if each $U_{i}$ has boundaries bounded piecewise in $C^{2}$ by $K$.  
More precisely:
\begin{enumerate}[leftmargin=*, labelindent=\parindent, label=(\alph*), ref=\alph*]
\item 
$\lambda(\mbb{T}^{N} \setminus{\bigcup_{i} U_{i}}) = 0$.
\item 
For each $i$, there are finitely many compact $C^{2}$ embedded codimension-one submanifolds $\set{\Gamma_{ij}}_{j}$ of $\mbb{T}^{N}$ such that $\partial U_{i}$ is contained in the union $\bigcup_{j} \Gamma_{ij}$. \label{cond:a}
\item 
The $C^{2}$-norm of each $\Gamma_{ij}$ is strictly less than $K$:
For each $\Gamma_{ij}$, there exist finitely many $C^{2}$ charts $\Phi_{l;ij} : B_{N} \subset \mbb{R}^{N} \to W_{l;ij}\subset \mbb{T}^{N},$ where $B_{N}$ is the unit ball of $\mbb{R}^{N},$ such that each $\Phi_{l;ij}$ and $\Phi_{l;ij}^{-1}$ has $C^{2}$-norm less than $K$, and $\Gamma_{ij} \subset \bigcup_{l}\Phi_{l;ij}(B_{N} \cap (\mbb{R}^{N-1} \oplus \set{0})).$ \label{cond:b}
\end{enumerate}
Let $\mcal{P} = \bigcup_{K > 0} \mcal{P}(K).$
\end{definition}

\begin{definition}[Partition complexity]
\label{d:partition-complexity}
For $\mscr{A}\in \mcal{P}$, define
\begin{equation*}
\kappa(\mscr{A}) = \sup_{x \in \mbb{T}^{N}} \#\vset{\Gamma_{ij} : x \in \Gamma_{ij}}.
\end{equation*}
One can view $\kappa(\mscr{A})$ as a measure of complexity for the partition $\mscr{A}.$
See~\cite{Cowieson_2000,Cowieson_2002} for a related notion.
\end{definition}

\begin{definition}[Nearby partitions]
\label{def:nearbyparts}
We say that two open partitions $\mscr{A} = \set{U_{i}}$ and $\widetilde{\mscr{A}} = \{\widetilde{U}_{i}\}$ in $\mcal{P}(K)$ are $\delta$-close if the following hold.
\begin{enumerate}[leftmargin=*, labelindent=\parindent, label=(\alph*), ref=\alph*]
\item 
The families $\mscr{A}$ and $\widetilde{\mscr{A}}$ have the same number of elements, and there also exists a correspondence between the bounding submanifolds $\Gamma_{ij}$ and $\widetilde{\Gamma}_{ij}$.
\item 
For each $i$, $\mrm{d}_{\Hausd} (U_{i}, \widetilde{U}_{i}) < \delta$, where $\mrm{d}_{\text{Hausd}}$ denotes the Hausdorff distance defined by
\begin{equation*}
\mrm{d}_{\Hausd} (A,B) = \max\vset{\sup\vset{\text{dist}(x,B) : x \in A}, \; \sup\vset{\text{dist}(y,A) : y \in B}}.
\end{equation*}
\item 
For each $i$ and $j$, the bounding submanifolds $\Gamma_{ij}$ and $\widetilde{\Gamma}_{ij}$ are less than $\delta$ apart in Hausdorff distance.
\end{enumerate}
\end{definition}

This notion of perturbation defines a topology on $\mcal{P}(K).$  
With respect to this topology, the complexity $\kappa$ is upper-semicontinuous.  
More precisely:

\begin{lemma}
Given $\mscr{A} \in \mcal{P}(K)$, there exists $\delta > 0$ such that $\kappa(\widetilde{\mscr{A}}) \leqs \kappa(\mscr{A})$ for each $\widetilde{\mscr{A}} \in \mcal{P}(K)$ that is $\delta$-close to $\mscr{A}.$
\end{lemma}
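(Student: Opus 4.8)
The plan is to show upper semicontinuity of $\kappa$ by a compactness argument on $\mathbb{T}^N$, exploiting the fact that the bounding submanifolds $\Gamma_{ij}$ are uniformly $C^2$-controlled (they live in $\mathcal{P}(K)$) and that the finitely many $C^2$ charts $\Phi_{l;ij}$ have $C^2$-norms bounded by $K$. First I would fix $x_0\in\mathbb{T}^N$ realizing the supremum $\kappa(\mscr{A})=\#\{\Gamma_{ij}: x_0\in\Gamma_{ij}\}$ (or, if the sup is not attained, a point where the count equals $\kappa(\mscr{A})$; the count is integer-valued and bounded, so it is attained on a nonempty set). Let $S$ denote the set of those bounding submanifolds passing through $x_0$. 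For each $\Gamma_{ij}\notin S$, since $\Gamma_{ij}$ is compact and $x_0\notin\Gamma_{ij}$, there is a positive distance $d_{ij}=\mathrm{dist}(x_0,\Gamma_{ij})>0$; let $d_0=\min d_{ij}$ over the finitely many $\Gamma_{ij}\notin S$.

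The key geometric point is that for a $C^2$ codimension-one submanifold whose charts $\Phi_{l;ij}$ and their inverses have $C^2$-norm less than $K$, one has a uniform control on how far the submanifold can "wander" when perturbed by Hausdorff distance $\delta$: if $\widetilde\Gamma_{ij}$ is $\delta$-close to $\Gamma_{ij}$ in Hausdorff distance, then $\widetilde\Gamma_{ij}\subset B_\delta(\Gamma_{ij})$ by the definition of Hausdorff distance. Hence, choosing $\delta<d_0/2$, any $\widetilde\Gamma_{ij}$ corresponding to a $\Gamma_{ij}\notin S$ cannot contain any point of $B_{d_0/2}(x_0)$. This is the main — and really the only — substantive step: it says that submanifolds bounded away from $x_0$ stay bounded away under small perturbation, which is immediate from the Hausdorff-distance definition and requires no curvature estimate at all. (The $C^2$ bound $K$ and the chart structure are what make the perturbation topology well-behaved and the correspondence in Definition~\ref{def:nearbyparts} meaningful, but for this particular lemma the crude inclusion $\widetilde\Gamma_{ij}\subset B_\delta(\Gamma_{ij})$ suffices.)

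It then follows that for every $y\in B_{d_0/2}(x_0)$ and every $\widetilde{\mscr{A}}\in\mathcal{P}(K)$ that is $\delta$-close to $\mscr{A}$, the only bounding submanifolds $\widetilde\Gamma_{ij}$ that can pass through $y$ are those in the correspondence-image of $S$, so $\#\{\widetilde\Gamma_{ij}: y\in\widetilde\Gamma_{ij}\}\leqs \# S=\kappa(\mscr{A})$. Since $\kappa(\widetilde{\mscr{A}})=\sup_{z\in\mathbb{T}^N}\#\{\widetilde\Gamma_{ij}:z\in\widetilde\Gamma_{ij}\}$ and this sup is attained, I would finish by covering $\mathbb{T}^N$ with finitely many balls of the above type — applying the argument at each point $z$ where the sup for some nearby partition could be attained. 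More cleanly: repeat the construction above not just at the extremizer $x_0$ but at \emph{every} point $x\in\mathbb{T}^N$ to get a radius $r(x)>0$ and threshold $\delta(x)>0$ such that nearby partitions have local count $\leqs \#\{\Gamma_{ij}:x\in\Gamma_{ij}\}\leqs\kappa(\mscr{A})$ on $B_{r(x)}(x)$; extract a finite subcover $\{B_{r(x_k)}(x_k)\}_{k=1}^{m}$ of $\mathbb{T}^N$ and set $\delta=\min_k\delta(x_k)>0$. Then for any $\widetilde{\mscr{A}}$ that is $\delta$-close to $\mscr{A}$ and any $z\in\mathbb{T}^N$, pick $k$ with $z\in B_{r(x_k)}(x_k)$ and conclude $\#\{\widetilde\Gamma_{ij}:z\in\widetilde\Gamma_{ij}\}\leqs\kappa(\mscr{A})$; taking the sup over $z$ gives $\kappa(\widetilde{\mscr{A}})\leqs\kappa(\mscr{A})$, as desired. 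I do not expect any real obstacle here — the only thing to be careful about is that "$\delta$-close" comes with a fixed correspondence between the $\Gamma_{ij}$'s, so that "the submanifolds through $x$" and "the submanifolds through $z$ in a nearby partition" can be compared index by index.
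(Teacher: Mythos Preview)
Your proof is correct and rests on the same observation as the paper's: Hausdorff $\delta$-closeness forces $\widetilde\Gamma_{ij}\subset B_\delta(\Gamma_{ij})$, so submanifolds not passing through $x$ stay away under small perturbation, and compactness of $\mathbb{T}^N$ makes the bound uniform. The paper packages this as a sequential-compactness argument by contradiction (take $\delta_n\to 0$, points $x_n$ witnessing complexity $k>\kappa(\mscr{A})$, pass to a subsequence with $x_n\to x$ and a fixed index set $J$, and conclude $x\in\bigcap_J\Gamma_{ij}$), whereas you unroll the same idea directly via an open cover and finite subcover; the two are logically equivalent and neither buys anything the other does not.
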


\begin{proof}
Suppose $x_{n} \in \mbb{T}^{N}$ is a point in $k$ boundary components of $\mscr{A}_{n}$ with $\mscr{A}_{n}$ being $\delta_{n}$-close to $\mscr{A}$, along a sequence $\delta_{n} \to 0$.
By passing to a subsequential limit if necessary, we have that $x_{n} \to x$ and $x_{n} \in \bigcap_{(i,j) \in J} \Gamma_{ij}^{n}$ for a set $J$ with $k$ elements, with each boundary component $\Gamma_{ij}^{n}$ of $\mscr{A}_{n}$ being $\delta_{n}$-close to the corresponding boundary component $\Gamma_{ij}$ of $\mscr{A}.$  As $n\to \infty$, this implies that $x\in \bigcap_{J} \Gamma_{ij}.$
\end{proof}

\begin{definition}[Piecewise-continuous maps]
For $\mscr{A} \in \mcal{P},$ let $C(\mscr{A})$ denote the set of maps $\hat{f} : \mbb{T}^{N} \to \mbb{T}^{N}$ that are continuous on each $U \in \mscr{A}.$  
Let $C(\mcal{P})$ denote the union of $C(\mscr{A})$ over all $\mscr{A} \in \mcal{P}$.
\end{definition}

\subsection{Two classes of piecewise-smooth expanding maps}

We now introduce the piecewise-$C^{1+\alpha}$ expanding maps we analyze, and do so in two stages.
The class $\mscr{M}$ consists of maps with a suitable balance between expansion and complexity.
Even for iterates of a single map, not requiring such a balance can lead to maps with exotic ergodic properties, such as the absence of ACIPs~\cite{buzzi2001}.
A Lasota-Yorke inequality (see~\cite{SaussolQuasi2000, GuptaOttTorok2013}) holds for a class $\mscr{M}^{*} \subset \mscr{M}$ of maps with enhanced regularity properties.
We will build nonstationary open systems using maps in $\mscr{M}^{*}$.

\begin{notation}
Let $\xi_{N} = \pi^{N/2} / (N / 2)!$ denote the volume of the unit ball in $\mbb{R}^{N}$, where $(N / 2)! = \Gamma(N / 2 + 1).$
\end{notation}

\begin{definition}[The class $\mscr{M}$]
Let $0 < s < 1$, $K > 0$, and $\kappa > 0$.
Assume the complexity-expansion balance
\begin{equation}\label{eq:complexity_v_expansion}
s^{\alpha} + \left(\frac{4s\kappa}{1 - s}\right)\left(\frac{\xi_{N - 1}}{\xi_{N}}\right)< 1.
\end{equation}
Let $\mscr{M}(s,K,\kappa)$ denote the set of piecewise-$C^{1 + \alpha}$ maps $\hat{h} : \mbb{T}^{N} \to \mbb{T}^{N}$ that satisfy the following properties for some associated partition $\mscr{A} = \mscr{A} (\hat{h}) = \set{U_{i} : 1 \leqs i \leqs L} \in \mcal{P} (K)$.
\begin{enumerate}[leftmargin=*, labelindent=\parindent, label=(\alph*), ref=\alph*]
\item 
$\hat{h} \in C(\mscr{A})$. 
\item 
For each $i$, $\hat{h}|_{U_{i}}$ is injective with a differentiable inverse and $\norm{D[(\hat{h}|_{U_{i}})^{-1}]} < s$.
(Backward contraction)
\item 
For each $i,$ both $\hat{h}|_{U_{i}}$ and all its partial derivatives extend to continuous functions on the closure of $U_{i}$.
\item 
For each $i$, $\norm{\hat{h}|_{U_{i}}}_{C^{1 + \alpha}} < K$. 
\item 
$\kappa( \mscr{A} ) \leqs \kappa $.
\end{enumerate}
Let $\mscr{M} = \bigcup_{(s, K, \kappa)} \mscr{M} (s,K,\kappa)$.
\end{definition}

\begin{definition}[The class $\mscr{M}^{*}$]
For $\varepsilon_{0} > 0$, let $\mscr{M}^{*}(s,K,\kappa, \vep_{0})$ denote the set of maps $\hat{h} \in \mscr{M}(s,K,\kappa)$ that can be extended to neighborhoods of the original sets $U_{i}$ as follows: 
For each $i,$ there is an open set $V_{i}\supset \overline{U}_{i}$ and an extension $\hat{h}_{(i)} : V_{i} \to \mbb{T}^{N}$ of $\hat{h}|_{U_{i}}$ such that 
\begin{enumerate}[leftmargin=*, labelindent=\parindent, label=(\alph*), ref=\alph*]
\item 
$\hat{h}_{(i)}(V_{i}) \supset \overline{B_{\vep_{0}}(\hat{h} (U_{i}))}$,
\item 
$\hat{h}_{(i)}$ is a $C^{1}$ diffeomorphism from $V_{i}$ to its image,
\item 
$\big\| D [\hat{h}_{(i)}^{-1}] \big\| < s$ on $\hat{h}_{(i)}(V_{i})$
(Backward contraction of extensions),
\item 
$\norm{\hat{h}_{(i)}}_{1 + \alpha} < K$ on $V_{i}$.
\end{enumerate}
Let $\mscr{M}^{*} = \bigcup_{(s, K, \kappa, \vep_{0})} \mscr{M}^{*} (s, K, \kappa, \vep_{0})$.
\end{definition}

\subsection{Topologizing the space of piecewise-smooth expanding maps}

For a fixed map $\hat{g} \in \mscr{M} (s, K, \kappa)$, we now describe perturbations of $\hat{g}$ that will generate a basis for a topology on $\mscr{M} (s, K, \kappa)$.   
This topology is similar to the topology used in~\cite{demersliverani2008, gouezelliverani2006}.

\begin{definition}
Let $\hat{h}, \hat{g} \in \mscr{M} (s,K,\kappa)$.
We say that $\hat{h}$ is a $\delta$-perturbation of $\hat{g}$ and write $\hat{h} \in \mscr{N}(\hat{g},\delta) = \mscr{N}(\hat{g},\delta; s, K, \kappa)$ if the following hold.
\begin{enumerate}[leftmargin=*, labelindent=\parindent, label=(\alph*), ref=\alph*]
\item 
$\hat{h} \in C(\mscr{A} (\hat{h}))$ with $\mscr{A} (\hat{h}) = \set{\widetilde{U}_{i}} \in \mcal{P} (K)$ $\delta$-close to $\mscr{A} (\hat{g}) = \set{U_{i}} \in \mcal{P} (K)$, as described in Definition~\ref{def:nearbyparts}.
\item 
Outside a $\delta$-neighborhood of the boundaries, the maps are $\delta$-close in $C^{1 + \alpha}$:
\begin{equation*}
\big\| \hat{h}|_{W_{i}} - \hat{g}|_{W_{i}} \big\|_{C^{1 + \alpha}} < \delta 
\end{equation*}
for each $i$, where
\begin{equation*}
W_{i} = \vset{ x \in U_{i} \cap \widetilde{U}_{i} : \dist (x,U_{i}^{c}) > \delta, \; \dist (x,\widetilde{U}_{i}^{c}) > \delta}.
\end{equation*}
\end{enumerate}
\end{definition}

\noindent
As $\hat{g}$ and $\delta$ vary, the sets $\mscr{N}(\hat{g},\delta; s, K, \kappa)$ form a basis for a topology on $\mscr{M}(s,K,\kappa)$.

\subsection{Holes}
\label{ss:holes}

In order to formulate conditional memory loss results for nonstationary open systems built from maps in $\mscr{M}^{*}$, we allow holes with small Lebesgue measure and controlled geometric complexity.
We use the former to verify~\pref{A:3} - stability of mixing.
We use the latter to establish a Lasota-Yorke/Doeblin-Fortet inequality.

\begin{definition}[$\mscr{H}$ - Admissible holes]
Let $H$ be an open subset of $\mbb{T}^{N}$ and $K > 0.$  
We say that $H \in \mscr{H}(K)$ if the following hold.
\begin{enumerate}[leftmargin=*, labelindent=\parindent, label=(\alph*), ref=\alph*]
\item 
There exist finitely many compact $C^{2}$ embedded codimension-one submanifolds $\vset{\Gamma_{j}}$ of $\mbb{T}^{N}$ such that $\partial H$ is contained in the union $\bigcup_{j} \Gamma_{j}$. 
\item 
The $C^{2}$-norm of each $\Gamma_{j}$ is strictly less than $K$:
For each $\Gamma_{j}$, there exist finitely many $C^{2}$ charts $\Phi_{l,j} : B_{N} \subset \mbb{R}^{N} \to W_{l,j}\subset \mbb{T}^{N},$ where $B_{N}$ is the unit ball of $\mbb{R}^{N},$ such that each $\Phi_{l,j}$ and $\Phi_{l,j}^{-1}$ has $C^{2}$-norm less than $K$, and $\Gamma_{j} \subset \bigcup_{l}\Phi_{l,j}(B_{N} \cap (\mbb{R}^{N-1} \oplus \vset{0})).$
\end{enumerate}
We define the set of admissible holes by $\mscr{H} = \bigcup_{K > 0}\mscr{H}(K).$
\end{definition}

\subsection{Densities}
\label{ss:quasiHolder}
We work with spaces of quasi-H\"older densities.
See e.g.~\cite{GuptaOttTorok2013, SaussolQuasi2000} for details about the properties of these quasi-H\"{o}lder spaces.   

For $\vp \in L^{1}(\lambda)$ and a Borel set $S \subset \mbb{T}^{N}$, define the {\itshape oscillation} of $\vp$ on $S$ by
\begin{equation*}
\osc(\vp, S) = \Esup (\vp,S) - \Einf (\vp,S), 
\end{equation*}
where $\Esup$ and $\Einf$ denote the essential supremum and essential infimum, respectively.
Given $\ve_{0} > 0,$ define the seminorm
\begin{equation}
\label{eq:osc-seminorm}
\abs{\vp}_{\alpha, \ve_{0}} = \sup_{0 < \ve \leqs \ve_{0}} \ve^{-\alpha} \int_{\mbb{T}^{N}} \osc(\vp, B_{\ve}(x))\;\mrm{d}\lambda(x).
\end{equation}
The seminorms $\abs{\vp}_{\alpha, \ve_{0}}$ are equivalent for different values of $\ve_{0}$.
This follows from the fact that for $0 < \ve_{1} < \ve_{2}$, there are finitely many vectors $\mbi{v}_{i}$ such that $B_{\ve_{2}}(x) \subset \bigcup_{i} B_{\ve_{1}}(x + \mbi{v}_{i})$ for all $x.$  Define
\begin{equation*}
\OSC_{\alpha} = \vset{\vp \in L^{1}(\lambda) : \abs{\vp}_{\alpha, \ve_{0}} < \infty}.
\end{equation*}
This space does not depend on $\ve_{0}$ and contains the $\alpha$-H\"older functions.  Define the norm $\vnorm{\cdot}_{\alpha,\ve_{0}}$ on $\OSC_{\alpha}$ by
\begin{equation}\label{eq:osc-norm}
\vnorm{\vp}_{\alpha,\ve_{0}} = \vnorm{\vp}_{L^{1}(\lambda)} + \abs{\vp}_{\alpha,\ve_{0}}.
\end{equation}
Equipped with this norm, $\OSC_{\alpha}$ is a Banach space and the unit ball of $\OSC_{\alpha}$ is precompact in $L^{1}(\lambda).$

For our conditional memory loss results, we work with densities in $\OSC_{\alpha}$:
\begin{equation*}
\mcal{D} = \vset{ \varphi \in \OSC_{\alpha} : \varphi \geqs 0, \; \: \norm{\varphi}_{L^{1} (\lambda)} = 1 }.
\end{equation*}

\subsection{Complexity for nonstationary open systems}

Gupta, Ott, and T\"{o}r\"{o}k prove that certain nonstationary systems built from maps in $\mscr{M}^{*}$ (no holes) exhibit statistical memory loss at an exponential rate~\cite{GuptaOttTorok2013}.
Crucial to these results is the balance between expansion and complexity~\eqref{eq:complexity_v_expansion} enjoyed by maps in $\mscr{M}^{*}$.
The introduction of holes into a nonstationary system can upset the expansion-complexity balance.
To handle this, we define a {\itshape complexity sequence} for nonstationary open systems.
Our definition leverages the fact that our holes and continuity partition elements possess comparable boundary regularity.

Let $(\hat{f}_{i})$ be a sequence of maps in $\mscr{M}$ and let $(H_{j})$ be a sequence of holes in $\mscr{H}$.  
We define a complexity sequence for the corresponding nonstationary open system $((F_{m}),(H_{j}))$ using dynamical partitions. 

\begin{definition}[Dynamical partitions for $\widehat{F}_{m}$ and $F_{m}$]

The dynamical partition for $\widehat{F}_{m} = \hat{f}_{m} \circ \cdots \circ \hat{f}_{1}$ is the join
\begin{equation*}
\mscr{A} (\widehat{F}_{m}) = \mscr{A} (\hat{f}_{1}) \vee \bigvee_{k=2}^{m} (\hat{f}_{k} \circ \cdots \circ \hat{f}_{2})^{-1} (\mscr{A} (\hat{f}_{1}))
\end{equation*}
for $m \geqs 2$.
We define the dynamical partition for $F_{m}$ $(m \geqs 1)$ by intersecting with the survivor set $S_{m}$:
\begin{equation*}
\mscr{A} (F_{m}) = \mscr{A} (\widehat{F}_{m}) \cap S_{m} \defas \vset{ V \cap S_{m} : V \in \mscr{A} (\widehat{F}_{m}) }.
\end{equation*}
\end{definition}

\begin{definition}[Complexity sequence]
\label{def:complexity-sequence}
Define the {\itshape complexity sequence} $(\kappa_{m})_{m=1}^{\infty}$ for the nonstationary open system $((F_{m}),(H_{j}))$ by $\kappa_{m} = \kappa (\mscr{A} (F_{m}))$, where partition complexity $\kappa (\cdot)$ is as in Definition~\ref{d:partition-complexity}.
\end{definition}

As we shall see, an inequality of Lasota-Yorke/Doeblin-Fortet type follows from suitable control of the complexity sequence.

\subsection{Verification of~\pref{A:1}--\pref{A:3}}
\label{ss:verification}

We show that the setting of Section~\ref{s:app} fits into the abstract framework of Section~\ref{s:framework1}.

\subsection*{\pref{A:1}}

We opt to work with systems built from maps in $\mscr{M}^{*}$ (rather than $\mscr{M}$) because the extension properties enjoyed by maps in $\mscr{M}^{*}$ facilitate the development of a Lasota-Yorke inequality.
Let $0 < s < 1$ and $\vep_{0} > 0$.
Let $(\hat{f}_{i})_{i=1}^{\infty}$ be a sequence of maps in $\mscr{M}^{*} (s, \vep_{0})$.
(Here $\mscr{M}^{*} (s, \vep_{0})$ denotes the union of $\mscr{M}^{*} (s, K, \kappa, \vep_{0})$ over $K > 0$ and $\kappa > 0$.)
Let $(H_{j})_{j=1}^{\infty}$ be a sequence of holes in $\mscr{H}$.

To establish a Lasota-Yorke inequality for the nonstationary open system $((F_{m}),(H_{j}))$, we look for times at which a balance between complexity and expansion emerges.

\begin{proposition}[Inequality of Lasota-Yorke/Doeblin-Fortet type]
\label{lydf-app}
Let $0 < s < 1$ and $\vep_{0} > 0$.
Let $(\hat{f}_{i})_{i=1}^{\infty}$ be a sequence of maps in $\mscr{M}^{*} (s, \vep_{0})$ and $(H_{j})_{j=1}^{\infty}$ be a sequence of holes in $\mscr{H}$.
Suppose $T_{1} \in \mbb{N}$ is a time for which the complexity-expansion balance
\begin{equation}
\label{ineq:balance}
s^{T_{1} \alpha} + \left( \frac{4 s^{T_{1}} \kappa_{T_{1}}}{1 - s^{T_{1}}} \right) \left( \frac{\xi_{N-1}}{\xi_{N}} \right) < 1
\end{equation}
holds for the nonstationary open system $((F_{m}), (H_{j}))$.
Assume $F_{T_{1}} \in \mscr{M}^{*} (s^{T_{1}}, \vep_{0})$; let $K_{T_{1}} > 0$ be such that $F_{T_{1}} \in \mscr{M}^{*} (s^{T_{1}}, K_{T_{1}}, \kappa_{T_{1}}, \vep_{0})$.
(Here we extend the definition of $\mscr{M}^{*}$ to include maps that are defined only on survivor sets.)
Then there exist positive constants $\vep_{\LY} = \vep_{\LY} (s^{T_{1}}, K_{T_{1}}, \kappa_{T_{1}}, \vep_{0}, N) \leqs \vep_{0}$, $\hat{\theta}_{\LY} = \hat{\theta}_{\LY} (s^{T_{1}}, K_{T_{1}}, \kappa_{T_{1}}, \vep_{\LY}) < 1$, and $\widehat{C}_{\LY} = \widehat{C}_{\LY} (s^{T_{1}}, K_{T_{1}}, \kappa_{T_{1}}, \vep_{\LY})$ such that
\begin{equation}
\label{e:LY-GOT}
\abs{\mcal{L}_{F_{T_{1}}} (\varphi)}_{\alpha , \vep_{\LY}} \leqs \hat{\theta}_{\LY} \abs{\varphi}_{\alpha , \vep_{\LY}} + \widehat{C}_{\LY} \norm{\varphi}_{L^{1} (\lambda)}
\end{equation}
for every $\varphi \in L^{1} (\lambda)$.
\end{proposition}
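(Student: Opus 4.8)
The plan is to reduce the estimate~\eqref{e:LY-GOT} to the single-map Lasota-Yorke inequality established for Saussol-type maps in~\cite{SaussolQuasi2000, GuptaOttTorok2013}, applied to the map $F_{T_1}$ viewed as a single piecewise-$C^{1+\alpha}$ expanding map defined on the survivor set $S_{T_1}$. The key observation is that $F_{T_1}$ is, by hypothesis, a member of the (extended) class $\mscr{M}^{*}(s^{T_1}, K_{T_1}, \kappa_{T_1}, \vep_0)$, so the only thing standing between us and the desired inequality is checking that the single-map argument goes through when the domain is $S_{T_1}$ rather than all of $\mbb{T}^N$, and that the complexity-expansion balance~\eqref{ineq:balance} — which is precisely the single-map balance~\eqref{eq:complexity_v_expansion} with $s$ replaced by $s^{T_1}$, $\alpha$ by $T_1\alpha$, and $\kappa$ by $\kappa_{T_1}$ — is what makes the argument close. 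I would therefore structure the proof as: (i) set up the transfer operator $\mcal{L}_{F_{T_1}}$ on the dynamical partition $\mscr{A}(F_{T_1}) = \mscr{A}(\widehat F_{T_1}) \cap S_{T_1}$; (ii) estimate $\osc(\mcal{L}_{F_{T_1}}(\varphi), B_\ve(x))$ branch-by-branch; (iii) sum over branches and over $x$, splitting the oscillation contribution into an "interior" part controlled by backward contraction (yielding the $s^{T_1\alpha}$ factor) and a "boundary" part controlled by the complexity $\kappa_{T_1}$ and the relative measure of $\ve$-neighborhoods of the boundary submanifolds (yielding the $\tfrac{4 s^{T_1}\kappa_{T_1}}{1-s^{T_1}}\cdot\tfrac{\xi_{N-1}}{\xi_N}$ factor); (iv) absorb the resulting $\norm{\varphi}_{L^1}$ remainder, choose $\vep_{\LY} \leqs \vep_0$ small enough that the extension property of $\mscr{M}^{*}$ gives room to push balls of radius $\vep_{\LY}$ around images, and read off $\hat\theta_{\LY} < 1$ from~\eqref{ineq:balance}.

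In more detail, for each branch $V \in \mscr{A}(\widehat F_{T_1})$ with $V \cap S_{T_1} \neq \emptyset$, write $g_V = (F_{T_1}|_{V \cap S_{T_1}})^{-1}$, which extends (by the $\mscr{M}^{*}$ hypothesis) to a $C^{1+\alpha}$ map on a neighborhood of $\overline{B_{\vep_0}(F_{T_1}(V))}$ with $\norm{Dg_V} < s^{T_1}$. Then $\mcal{L}_{F_{T_1}}(\varphi) = \sum_V (\varphi \circ g_V)\, |\det Dg_V|\, \mathbf{1}_{F_{T_1}(V \cap S_{T_1})}$, and for a ball $B_\ve(x)$ one estimates $\osc$ of each summand using the standard decomposition: the oscillation of $\varphi\circ g_V$ over $B_\ve(x)$ is controlled by $\osc(\varphi, B_{s^{T_1}\ve}(g_V(x)))$ (backward contraction shrinks the ball), the oscillation of the Jacobian factor is $O(\ve^\alpha)$ times $\norm{D g_V}_{C^\alpha}$-type quantities and feeds into the $\norm{\varphi}_{L^1}$ term after integration, and — crucially — on balls that straddle $\partial F_{T_1}(V\cap S_{T_1})$ one loses control and simply bounds the oscillation by $\Esup(\varphi\circ g_V,\cdot)$ on $B_\ve(x)$, which after integrating in $x$ contributes a term proportional to $\lambda\big(B_\ve(\partial(F_{T_1}(V\cap S_{T_1})))\big)$. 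Since $\partial(F_{T_1}(V \cap S_{T_1}))$ is made up of images of the $C^2$ boundary submanifolds of $\mscr{A}(\widehat F_{T_1})$ together with images of the boundary submanifolds of the holes $H_j$ (all of which have $C^2$-norm controlled, by the definition of $\mscr{H}$ and by $F_{T_1} \in \mscr{M}^{*}$), the co-area/tubular-neighborhood estimate gives $\lambda(B_\ve(\Gamma)) \leqs 2\ve\, \tfrac{\xi_{N-1}}{\xi_N}(1 + o(1))$ per submanifold, and the number of submanifolds through any point is at most $\kappa_{T_1} = \kappa(\mscr{A}(F_{T_1}))$ — this is exactly where the complexity sequence of Definition~\ref{def:complexity-sequence} enters, and where the hole boundaries get folded into the partition-complexity bookkeeping. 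Summing the geometric series in the number of branches (whose backward-contraction constants multiply, giving the $\tfrac{1}{1-s^{T_1}}$), one arrives at
\begin{equation*}
\abs{\mcal{L}_{F_{T_1}}(\varphi)}_{\alpha, \vep_{\LY}} \leqs \left( s^{T_1\alpha} + \frac{4 s^{T_1} \kappa_{T_1}}{1 - s^{T_1}}\cdot\frac{\xi_{N-1}}{\xi_N} + C\,\vep_{\LY}^{\beta} \right) \abs{\varphi}_{\alpha, \vep_{\LY}} + \widehat C_{\LY}\,\norm{\varphi}_{L^1(\lambda)}
\end{equation*}
for a suitable exponent $\beta > 0$ and constant $C$ depending on $(s^{T_1}, K_{T_1}, \kappa_{T_1}, N)$; by~\eqref{ineq:balance} the bracketed coefficient is $< 1$ once $\vep_{\LY}$ is chosen small enough, which defines $\hat\theta_{\LY}$.

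The main obstacle — and the step deserving the most care — is the boundary term: one must verify that $\partial\big(F_{T_1}(V \cap S_{T_1})\big)$ really is contained in a union of $C^2$ submanifolds whose number-through-a-point is captured by $\kappa(\mscr{A}(F_{T_1}))$, even though $S_{T_1}$ involves pulling back holes $H_i$ under the various partial compositions $\widehat F_i$. This requires (a) knowing that $C^2$ regularity of boundary submanifolds is preserved under the $C^{1+\Lip}$ maps in $\mscr{M}^{*}$ (so $\widehat F_i^{-1}(H_i)$ has $C^2$ boundary when $H_i \in \mscr{H}$), with explicit control on the $C^2$-norm in terms of $s$, $K$; and (b) checking that the extension property of $\mscr{M}^{*}$ — specifically that $\hat h_{(i)}(V_i) \supset \overline{B_{\vep_0}(\hat h(U_i))}$ — survives composition, i.e. that $F_{T_1}$ inherits an $\mscr{M}^{*}(s^{T_1}, \cdot, \cdot, \vep_0)$ extension structure (this is assumed in the statement, but one should at least remark on why the assumption is natural). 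Everything else — the Jacobian oscillation estimates, the change-of-variables bound $\int |\varphi \circ g_V|\,|\det Dg_V| \leqs \norm{\varphi}_{L^1}$, the passage from $\ve$-balls to $\vep_{\LY}$-balls — is routine and parallels~\cite{SaussolQuasi2000, GuptaOttTorok2013} verbatim with $s \rightsquigarrow s^{T_1}$, $\alpha \rightsquigarrow T_1\alpha$, $\kappa \rightsquigarrow \kappa_{T_1}$.
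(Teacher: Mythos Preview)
Your proposal is correct and takes essentially the same approach as the paper: the paper's ``proof'' of Proposition~\ref{lydf-app} consists entirely of the remark that the argument for systems without holes appears in~\cite{SaussolQuasi2000, GuptaOttTorok2013} and that the introduction of holes does not substantially alter the proof, so details are omitted. Your sketch is thus a faithful (and considerably more detailed) unpacking of what the paper defers to the cited references; one small slip is the substitution ``$\alpha \rightsquigarrow T_1\alpha$'' at the end, which should simply be $s \rightsquigarrow s^{T_1}$ with $\alpha$ unchanged (as your displayed inequality already reflects).
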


The proof of this inequality for systems without holes appears in~\cite{SaussolQuasi2000, GuptaOttTorok2013}.
The introduction of holes does not substantially alter the proof, so we omit the details here.
Inequality~\eqref{e:LY-DF} may be obtained by iterating~\eqref{e:LY-GOT}, for instance.

\subsection*{\pref{A:2}}
Assumption~\pref{A:2} is used only to prove Lemma~\ref{L:control}. 
We opt here to directly establish Lemma~\ref{L:control} for the oscillation seminorm in order to demonstrate how this seminorm behaves in analytical estimates.

\begin{proof}[Proof of Lemma~\ref{L:control} for the oscillation seminorm]
We make adjustments to the lower bounds on $\varphi (z)$ and $\mbb{E} [\mcal{L}_{F} (\varphi) | \mscr{Q}_{n_{0}}] (x)$.
First, for almost every $z \in Q' \cap F^{-1} (Q(x))$ we have
\begin{align*}
\varphi (z) &\geqs \left( \frac{1}{\lambda (Q')} \int_{Q'} \varphi \, \mrm{d} \lambda \right) - \osc (\varphi, Q')
\\
&\geqs \frac{1}{\lambda (Q')} \left( \int_{Q'} \varphi \, \mrm{d} \lambda - \int_{Q'} \osc (\varphi, B(y, \diam (\mscr{Q}_{n_{0}}))) \, \mrm{d} \lambda (y) \right).
\end{align*}
Here $\diam (\mscr{Q}_{n_{0}}) = \sup_{Q' \in \mscr{Q}_{n_{0}}} \diam (Q')$ and $\diam (\cdot)$ refers to metric diameter, as opposed to the measure-theoretic partition diameter $\diam_{\lambda}$ that appears in Definition~\ref{defn:mix}.
Second, writing $\widehat{E}$ for $\mbb{E} [\mcal{L}_{F} (\varphi) | \mscr{Q}_{n_{0}}]$, we have
\begin{align*}
\widehat{E} (x) &\geqs \frac{1}{\lambda (Q(x))} \sum_{Q' \in \mscr{Q}_{n_{0}}} \int_{Q' \cap F^{-1} (Q(x))} \frac{1}{\lambda (Q')} \left( \int_{Q'} \varphi \, \mrm{d} \lambda - \int_{Q'} \osc (\varphi, B(y, \diam (\mscr{Q}_{n_{0}}))) \, \mrm{d} \lambda (y) \right) \mrm{d} \lambda (z)
\\
&= \sum_{Q' \in \mscr{Q}_{n_{0}}} \frac{\lambda (Q' \cap F^{-1} (Q(x)))}{\lambda (Q(x)) \lambda (Q')} \left( \int_{Q'} \varphi \, \mrm{d} \lambda - \int_{Q'} \osc (\varphi, B(y, \diam (\mscr{Q}_{n_{0}}))) \, \mrm{d} \lambda (y) \right)
\\
&\geqs \zeta_{1} \int_{\mbb{T}^{N}} \varphi \, \mrm{d} \lambda - \zeta_{2} |\varphi|_{\alpha, \vep_{\LY}} (\diam (\mscr{Q}_{n_{0}}))^{\alpha}
\\
&\geqs (\zeta_{1} - \zeta_{2} a (\diam (\mscr{Q}_{n_{0}}))^{\alpha}) \int_{\mbb{T}^{N}} \varphi \, \mrm{d} \lambda
.
\end{align*}
Here we assume that $\diam (\mscr{Q}_{n_{0}}) \leqs \vep_{\LY}$.
We work with $\vep_{\LY}$ instead of $\vep_{0}$ because in the setting of Section~\ref{ss:verification}, the Lasota-Yorke inequality holds for $| \cdot |_{\alpha, \vep_{\LY}}$.
The proof of the upper bound in Lemma~\ref{L:control} proceeds analogously.
We have therefore established Lemma~\ref{L:control} for the oscillation seminorm with $M = (\diam (\mscr{Q}_{n_{0}}))^{1 - \alpha}$.
\end{proof}

\subsection*{\pref{A:3}}

Stability of mixing is automatic in the setting of Section~\ref{ss:verification}.

\subsection{Putting the pieces together}

\begin{MainTheorem}
In the setting of Section~\ref{ss:verification}, Theorem~\ref{thrm:local} holds with the following adjustments.
\begin{itemize}
\item
The space $\mscr{M}^{*} (s, \vep_{0})$ replaces $\mscr{M}$.
\item
Hypothesis~\pref{A:2} is not needed.
\item
The space $\mscr{E} (\zeta_{1}, \zeta_{2})$ is now defined in terms of metric diameter $\diam (\cdot)$ instead of measure-theoretic diameter $\diam_{\lambda} (\cdot)$.
\item
We assume control of the complexity-expansion balance for the nonstationary open systems.
In particular, we assume that~\eqref{e:LY-GOT} can be iterated.
See Proposition~\ref{lydf-app}. 
\item
One must adjust the parameter selection procedure to account for the following (see~\cite{GuptaOttTorok2013} for guidance).
\begin{itemize}
\item
We require $\diam (\mscr{Q}_{n_{0}}) \leqs \vep_{\LY}$.
\item
We use $M = (\diam (\mscr{Q}_{n_{0}}))^{1 - \alpha}$.
\end{itemize}
\end{itemize}
\end{MainTheorem}

Theorem~\ref{thrm:global} holds in the setting of Section~\ref{ss:verification} as well, provided one makes analogous adjustments.

\section{Future directions}
\label{s:future}

We conclude by discussing some directions for future research.
We formulate open-ended queries, as opposed to precise conjectures.

\subsection*{Complexity control for nonstationary open systems via transversality}
A key component in the statement and proof of Proposition~\ref{lydf-app} is the existence of a time for which~\eqref{ineq:balance} holds, which in turn relies heavily on the behavior of the complexity sequence $(\kappa_{m})$ as time progresses.
The existence of such a time is not guaranteed {\itshape a priori}.
The dynamical partition $\mscr{A}(F_m)$ may be quite complicated and $\kappa_{m}$ may grow quickly as a consequence.
Further, piecewise-smooth expanding systems may exhibit pathological behavior.
For instance, Tsujii~\cite{Tsujii_2000} constructs, for each $1 \leqs r < \infty$, a piecewise-$C^r$ expanding map $\hat{f}: D \to D$ on an open rectangle $D \subset \mbb{R}^2$ such that the dynamical partition for $\hat{f}$ has infinitely many connected components.
Tsujii also establishes the existence of an open set $B \subset D$ such that the diameter of $\hat{f}^n(B)$ converges to $0$ as $n\to\infty,$ showing that iterates of $\hat{f}$ `cut up' $B$ into many small pieces.
However, one expects that such pathologies will not occur if the sequence of maps $(\hat{f}_{i})$ possesses transversality properties and if the holes are placed in `general position'.

Cowieson shows in~\cite{Cowieson_2002} that for a certain class of nonstationary closed systems $(\widehat{F}_{m})$, a certain transversality assumption implies that the complexity $\hat{\kappa}_m \defas \kappa(\mscr{A} (\widehat{F}_{m}))$ remains bounded above by a constant that depends only on the initial partition $\mscr{A}(\hat{f}_1)$ of $\mbb{T}^{N}$.
Our first future direction suggests generalizing the Cowieson ideas to the nonstationary open context.

\begin{direction}
  In the context of Section~\ref{s:app}, formulate a notion of transversality for nonstationary open systems that implies meaningful control of the complexity sequence.
  (Ideally, transversality would imply that the complexity sequence remains bounded.)
  Show that most (in an appropriate sense) nonstationary open systems satisfy this notion of transversality.
\end{direction}

The interplay between complexity, hole geometry, and hole placement is subtle.
The presence of holes complicates the complexity control problem for a number of reasons.
First, the domain of the system at time $m$ (that is, the time-$m$ survivor set) is a function of $m$.
Second, holes can potentially increase or decrease complexity.
On one hand, hole boundaries introduce additional complexity.
On the other hand, an area of high complexity for a nonstationary closed system could fall into a hole of an associated nonstationary open system at time $m$, dramatically reducing $\kappa_{m}$.

\subsection*{General nonstationary open systems}

We conclude with general future directions.

\begin{direction}
  In the context of nonstationary open systems, analyze the limit in which hole size tends to zero.
\end{direction}

\begin{direction}
  Develop an operator-theoretic framework (using anisotropic Banach spaces, perhaps) that facilitates the analysis of nonstationary open systems constructed using maps that possess both expanding and contracting behavior.
\end{direction}

\begin{acknowledgments}
This work has been partially supported by NSF grant DMS-1413437 (WO).
\end{acknowledgments}

\bibliographystyle{siam}
\bibliography{edge-arxiv-v5}

\end{document}